\numberwithin{equation}{section}
                        \theoremstyle{plain}
\newcommand\no[1]{}
\newtheorem{theorem}{Theorem}[section]
\newtheorem{thm}{Theorem}
\newtheorem{lemma}[theorem]{Lemma}
\newtheorem{proposition}[theorem]{Proposition}
\theoremstyle{definition}
\def\BC{\mathbb C}
\def\BN{\mathbb N}
\def\BZ{\mathbb Z}
\def\BR{\mathbb R}
\def\BQ{\mathbb Q}
\def\la{\langle}
\def\ra{\rangle}
\DeclareMathOperator{\tr}{\mathrm tr}
\def\be { \begin{equation} }
\def\ee { \end{equation} }
\begin{document}

\title[Classical pretzel knots and Left orderability]{Classical pretzel knots and Left orderability}

\author{Arafat Khan}
\author{Anh T. Tran}

\thanks{2010 \textit{Mathematics Subject Classification}.\/ 57M27, 57M25.}
\thanks{{\it Key words and phrases.\/}
cyclic branched cover, Dehn surgery, L-space, left orderable group, pretzel knot, representation.}

\begin{abstract}
  We consider the classical pretzel knots $P(a_1, a_2, a_3)$, where $a_1, a_2, a_3$ are positive odd  integers. By using continuous paths of elliptic $\mathrm{SL}_2(\BR)$-representations, we show that (i) the 3-manifold obtained by $\frac{m}{l}$-surgery on $P(a_1, a_2, a_3)$ has left orderable fundamental group if $\frac{m}{l} < 1$, and (ii) the $n^{\mathrm{th}}$-cyclic branched cover of $P(a_1, a_2, a_3)$ has left orderable fundamental group if $n > 2\pi / \arccos(1-2/(1+a_1 a_2 + a_2 a_3 + a_3 a_1))$. 
\end{abstract}

\address{Department of Mathematical Sciences, The University of Texas at Dallas, 
Richardson, TX 75080, USA}
\email{arafat@utdallas.edu}

\address{Department of Mathematical Sciences, The University of Texas at Dallas, 
Richardson, TX 75080, USA}
\email{att140830@utdallas.edu}

\maketitle

\section{Introduction}

A non-trivial group $G$ is called left orderable if it admits a total ordering $<$ such that $g<h$ implies $fg < fh$ for all elements $f,g,h$ in $G$. We study the left orderability of the fundamental groups of $3$-manifolds. This study is motivated by  the L-space conjecture of Boyer-Gordon-Watson \cite{BGW} which states that an irreducible rational homology 3-sphere is an L-space if and only if its fundamental group is not left orderable. Here a rational homology 3-sphere $Y$ is an L-space if its Heegaard Floer homology $\widehat{\mathrm{HF}}(Y)$ has rank equal to the order of $H_1(Y; \BZ)$ \cite{OS}.  We will focus on $3$-manifolds obtained by Dehn surgeries along a knot in $S^3$ or by taking cyclic branched covers of a knot.

For a rational knot $C(k,l)$ in the Conway notation or the $(-2,3,7)$-pretzel knot, some intervals of slopes for which the 3-manifold  obtained from $S^3$ by Dehn surgery along the knot has left orderable fundamental group was determined in \cite{BGW, HT-52, HT-genus1, Tr, Tr-I, Ga, KTT, Va} by using continuous paths of elliptic and hyperbolic $\mathrm{SL}_2(\BR)$-representations of the knot group and the fact that the universal covering group of $\mathrm{SL}_2(\BR)$ is a left orderable group. In the case of the figure eight knot, by using taut foliations it was proved in \cite{Zu} that any non-trivial surgery on the $n^{\text{th}}$-cyclic branched cover of the figure-eight knot has left orderable fundamental group, see also \cite{Hu-taut}. 

A sufficient condition for the fundamental group of the $n^{\text{th}}$-cyclic branched cover of $S^3$ along a prime knot to be left orderable was given in \cite{BGW, Hu} in terms of $\mathrm{SL}_2(\BR)$-representations of the knot group. As an application, it was proved in \cite{Go} that for any rational knot $K$ with non-zero signature the fundamental group of the $n^{\text{th}}$-cyclic branched cover of $S^3$ along $K$ is left orderable for sufficiently large $n$, see also \cite{Hu, Tr-sign}. For a rational knot  $C(k,l)$ or  $C(2n+1,2,2)$ in the Conway notation, the left orderability of the fundamental groups of the cyclic branched covers of $S^3$ along the knot was also determined in \cite{DPT, Tr-cover, Tu}. 

A systematic approach to the understanding of the left orderability of the fundamental groups of 3-manifolds obtained by Dehn surgeries along a knot in $S^3$ or by taking cyclic branched covers of a knot was proposed by Culler-Dunfield \cite{CD}, by using continuous paths of elliptic $\mathrm{SL}_2(\BR)$-representations of the knot group. A similar approach using hyperbolic $\mathrm{SL}_2(\BR)$-representations was also proposed by Gao \cite{Ga-holonomy}. A geometric approach using taut foliations was proposed in \cite{Zu} and references therein.

In this paper, we consider the classical pretzel knots $P(a_1, a_2, a_3)$, where $a_1, a_2, a_3$ are positive odd  integers. By using continuous paths of elliptic $\mathrm{SL}_2(\BR)$-representations, we will prove the following. 

\begin{thm} \label{main}
Let $a_1, a_2, a_3$ be positive odd  integers. Then 

(i) the 3-manifold obtained by $\frac{m}{l}$-surgery on $P(a_1, a_2, a_3)$ has left orderable fundamental group if $\frac{m}{l} < 1$, and

(ii) the $n^{\mathrm{th}}$-cyclic branched cover of $P(a_1, a_2, a_3)$ has left orderable fundamental group if $n > 2\pi / \arccos(1-2/(1+a_1 a_2 + a_2 a_3 + a_3 a_1))$. 
\end{thm}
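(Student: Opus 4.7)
The plan is to adapt the Culler--Dunfield elliptic representation strategy used previously for two-bridge knots and the $(-2,3,7)$-pretzel. Concretely, I would construct a continuous one-parameter family $\{\rho_t\}$ of non-abelian elliptic $\mathrm{SL}_2(\BR)$-representations of $G = \pi_1(S^3 \setminus P(a_1,a_2,a_3))$, parameterized by the rotation angle $t$ of the meridian, and then at the appropriate value of $t$ promote the representation to a left ordering of the surgery or cyclic branched cover quotient by lifting it to the left orderable universal cover $\widetilde{\mathrm{PSL}_2(\BR)}$.

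For the setup I would write $G = \la \mu_1, \mu_2 \mid \mu_1 w = w \mu_2 \ra$ using a Riley-type word $w = w(a_1, a_2, a_3)$ coming from a bridge presentation of the pretzel, and express the longitude $\lambda$ as an explicit word in $\mu_1, \mu_2$. Put
\[
\rho_t(\mu_1) = \begin{pmatrix} \cos t & \sin t \\ -\sin t & \cos t \end{pmatrix}, \qquad \rho_t(\mu_2) = C_t^{-1} \rho_t(\mu_1) C_t,
\]
where $C_t \in \mathrm{SL}_2(\BR)$ depends on a single real auxiliary parameter, determined by requiring $\rho_t(\mu_1 w) = \rho_t(w \mu_2)$. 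An implicit function theorem argument applied to this real-analytic equation should produce, on some maximal interval $(0, t^*)$, a smooth branch of non-abelian representations emerging from the distinguished singular point of the character variety at $t = 0$. Since $\mu_1$ and $\lambda$ commute in $G$, $\rho_t(\lambda)$ is automatically a rotation by some angle $\phi(t)$ sharing an axis with $\rho_t(\mu_1)$, with $\phi(0) = 0$ and $\phi$ real-analytic on $(0, t^*)$.

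For part (i), the $(m/l)$-surgery relation $\rho_t(\mu^m \lambda^l) = \pm I$ reduces to the single scalar equation $m t + l\,\phi(t) \in \pi \BZ$. A Taylor expansion of $\phi$ near $0$, with leading coefficient controlled by the knot determinant $a_1 a_2 + a_2 a_3 + a_3 a_1$, shows that $t \mapsto m t + l\,\phi(t)$ has negative leading slope whenever $m/l < 1$; an intermediate value argument then locates a parameter $t_0 \in (0, t^*)$ at which the surgery relation holds, and the resulting quotient representation is non-abelian and elliptic, hence lifts to $\widetilde{\mathrm{PSL}_2(\BR)}$ and yields a left ordering. For part (ii), $\rho_t$ descends to $\pi_1(\Sigma_n(K))$ precisely when $t = \pi/n$, so it suffices to show that $\pi/n$ lies in $(0, t^*)$ and that the resulting image is non-abelian. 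The critical angle $t^{*} = \tfrac12 \arccos\!\big(1 - 2/(1 + a_1 a_2 + a_2 a_3 + a_3 a_1)\big)$ should arise as the smallest positive $t$ at which the family first degenerates — either $|\tr \rho_t(\lambda)|$ reaches $2$, or the branch collides with the abelian locus — so the hypothesis $n > 2\pi / \arccos(\cdots)$ is exactly the inequality $\pi/n < t^{*}$.

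The main obstacle will be establishing the explicit form of $t^{*}$ and verifying that the non-abelian elliptic branch genuinely persists for the entire interval $(0, t^{*})$ uniformly in $(a_1,a_2,a_3)$. This reduces to a concrete analysis of the Riley polynomial for $P(a_1,a_2,a_3)$ — likely exploiting the Chebyshev-like structure of the odd-twist tangles — to isolate the first positive $t$ at which the polynomial has a double root or forces $\rho_t(\mu_2) = \rho_t(\mu_1)$, and to match this value with the stated $\arccos$ expression. A secondary issue is ruling out abelian image of the specific quotient representation in part (i), to ensure that the lift to $\widetilde{\mathrm{PSL}_2(\BR)}$ produces a genuinely non-trivial action on $\BR$, and hence a nontrivial left ordering of the surgered group.
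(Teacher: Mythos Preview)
Your overall strategy---a continuous arc of elliptic $\mathrm{SL}_2(\BR)$-representations, intermediate value arguments on the slope function, and the Culler--Dunfield lifting to $\widetilde{\mathrm{SL}_2(\BR)}$---matches the paper's. But the concrete setup you propose does not work: the classical pretzel knots $P(a_1,a_2,a_3)$ with all $a_j$ odd and $\ge 3$ have bridge number three, so there is no two-generator one-relator Riley presentation $\la \mu_1,\mu_2 \mid \mu_1 w = w\mu_2\ra$ and no single ``Riley polynomial'' in one auxiliary variable to analyse. Your ansatz $\rho_t(\mu_1),\,\rho_t(\mu_2)=C_t^{-1}\rho_t(\mu_1)C_t$ with a single real parameter cannot parametrize the representation variety; an extra generator and an extra trace coordinate are genuinely needed.

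The paper works instead with the three-meridian presentation of $G(K)$ and with Chen's description of the $\mathrm{SL}_2(\BC)$-character variety, which encodes an irreducible representation by the traces $r_j=\tr\rho(x_{j+1}x_{j+2}^{-1})$, a quantity $\gamma$, and the meridian trace $t$, subject to three Chebyshev-type equations $(\gamma-2-r_j)S_{k_j}(r_j)=(\sigma_1-r_j-\gamma)S_{k_j-1}(r_j)$ plus one trace identity. The real work is Proposition~\ref{existence}: for each $r_1>2$ there is a \emph{unique} $r_3\in(2,r_1)$ (and then $r_2$) solving the coupled Chebyshev system, proved by showing a certain function of $r_3$ is strictly increasing. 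This is what replaces your implicit-function step and is where the Chebyshev monotonicity lemmas are used. The meridian trace $T=t^2$ is then an explicit rational expression in $r_1,r_2,r_3,\gamma$, and the key limit $\lim_{r_1\to 2^+}T=4-4/(1+a_1a_2+a_2a_3+a_3a_1)$ (Proposition~\ref{lim}) is obtained by computing $\lim_{r_1\to 2^+}(r_j-2)/(r_1-2)$ explicitly---this is how the $\arccos$ constant enters, at the abelian end of the arc, not via a Taylor expansion of a longitude angle. Finally, for part (i) the paper does \emph{not} treat all slopes $<1$ with a single family: the ranges $(-\infty,0)$ and $(0,1)$ are handled separately using $\rho^+$ (meridian eigenvalue $e^{i\theta}$) and $\rho^-$ (eigenvalue $-e^{i\theta}=e^{i(\theta-\pi)}$), with the longitude argument $\varphi$ shown to run from $0$ to $\pi$ in each case. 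Your single-slope argument via ``negative leading slope whenever $m/l<1$'' would not produce both intervals.
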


\begin{figure}[h]
	\centering
    \begin{overpic}[width=0.45 \textwidth,tics=9]{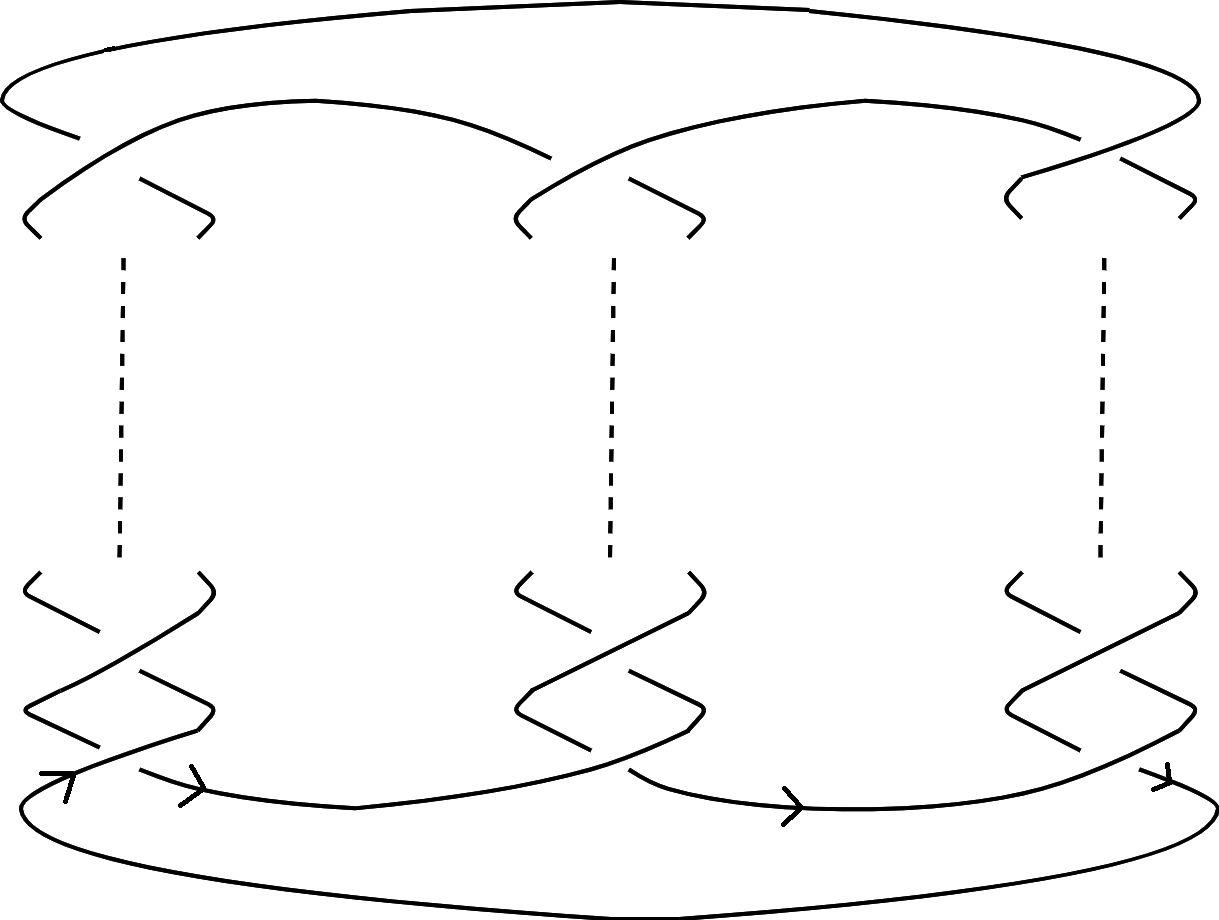}
        \put (12, 40) {\small{\text{$a_1$ crossings}}}
        \put (52, 40) {\small{\text{$a_2$ crossings}}}
        \put (92, 40) {\small{\text{$a_3$ crossings}}}
        \put (68, 12) {{$x_1$}}
        \put (48, 2) {{$x_2$}}
        \put (28, 12) {{$x_3$}}
    \end{overpic}
    	\caption{The pretzel knot $P(a_1, a_2, a_3)$.}
	\label{fig:pretzel}
\end{figure}

The pretzel knot $P(a_1, a_2, a_3)$, where $a_1, a_2, a_3$ are positive odd  integers, is a non-torus alternating knot and hence any non-trivial surgery along it yields a 3-manifold which is not an  L-space, see \cite{OS}. Moreover, Boileau-Boyer-Gordon \cite{BBG} proved that the $n^{\text{th}}$-cyclic branched cover of $P(a_1, a_2, a_3)$ is not an L-space if $n > 2\pi / \arccos(1-2/(1+a_1 a_2 + a_2 a_3 + a_3 a_1))$. So our results support the L-space conjecture.

The paper is organized as follows. In Section \ref{rep} we recall a description of $\mathrm{SL}_2(\BC)$-representations of odd classical pretzel knots and then we study $\mathrm{SL}_2(\BR)$-representations of these knots.  In Section \ref{LO} we use continuous families of $\mathrm{SL}_2(\BR)$-representations  constructed in Section \ref{rep} to give a proof of Theorem \ref{main}. 

\section{$\mathrm{SL}_2(\BR)$-representations of pretzel knots} \label{rep}

In this section we study   $\mathrm{SL}_2(\BR)$-representations of pretzel knots $P(a_1, a_2, a_3)$ where $a_1, a_2, a_3$ are positive odd integers. We first recall the Chebyshev polynomials of the second kind and prove their properties. Then we recall a description of  $\mathrm{SL}_2(\BC)$-representations of $P(a_1, a_2, a_3)$ in \cite{Ha}. Finally, we prove the existence and continuity of a family of  $\mathrm{SL}_2(\BR)$-representations. We also prove properties of this family. 

\subsection{Chebychev polynomials}

Let $\{S_j(z)\}_{j \in \BZ}$ be the sequence of Chebychev polynomials in the variable $z$ defined by $S_0(z) = 1$, $S_1(z) = z$ and $S_j(z) = z S_{j-1}(z) - S_{j-2}(z)$ for all integers $j$. By induction we have $S_j(\pm2) = (\pm 1)^j (j+1)$ and $S_j(z) = \frac{s^{j+1} - s^{-(j+1)}}{s - s^{-1}}$ for $z = s + s^{-1} \not= \pm 2$. Using this fact, one can prove the following, see e.g. \cite{KTT}.

\begin{lemma} \label{chev}
We have $S^2_n(z) - S_{n+1}(z) S_{n-1}(z) = S^2_n(z) + S^2_{n-1}(z) - z S_{n}(z) S_{n-1}(z) = 1$.
\end{lemma}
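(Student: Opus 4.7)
My plan is to establish the two equalities separately, treating the second one as a trivial consequence of the recurrence and the first one as the substantive claim.

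For the second equality, I would substitute the recurrence $S_{n+1}(z) = z S_n(z) - S_{n-1}(z)$ into the middle expression:
\begin{equation*}
S_n^2(z) - S_{n+1}(z) S_{n-1}(z) = S_n^2(z) - \bigl(z S_n(z) - S_{n-1}(z)\bigr) S_{n-1}(z) = S_n^2(z) + S_{n-1}^2(z) - z S_n(z) S_{n-1}(z).
\end{equation*}
This is purely algebraic manipulation with no content beyond the defining recurrence.

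For the first equality, namely $S_n^2(z) - S_{n+1}(z) S_{n-1}(z) = 1$, the cleanest route is direct computation from the closed form already recorded in the paper, $S_j(z) = (s^{j+1}-s^{-(j+1)})/(s-s^{-1})$ for $z = s + s^{-1} \ne \pm 2$. Expanding
\begin{equation*}
\bigl(s^{n+1} - s^{-n-1}\bigr)^2 - \bigl(s^{n+2}-s^{-n-2}\bigr)\bigl(s^n - s^{-n}\bigr)
\end{equation*}
collapses (the terms $s^{2n+2}$ and $s^{-2n-2}$ cancel) to $-2 + s^2 + s^{-2} = (s - s^{-1})^2$, which divided by $(s-s^{-1})^2$ is $1$. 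This verifies the identity as a polynomial identity in $z$ on the Zariski dense set $z \ne \pm 2$, hence everywhere.

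Alternatively, and perhaps preferable since it avoids the substitution $z = s + s^{-1}$, one can argue by induction. Set $D_n(z) := S_n^2(z) - S_{n+1}(z) S_{n-1}(z)$. The base case $D_0 = 1$ follows by noting that the recurrence forces $S_{-1}(z) = 0$ (since $S_1 = z S_0 - S_{-1}$ gives $z = z - S_{-1}$), so $D_0 = 1 - z \cdot 0 = 1$. For the inductive step I would compute $D_n - D_{n-1}$ and substitute $S_{n+1} = zS_n - S_{n-1}$ and $S_{n-2} = zS_{n-1} - S_n$; the four resulting terms cancel in pairs to give $D_n = D_{n-1}$. Neither approach has a real obstacle; the only minor point to be careful about is the value of $S_{-1}$, which must be extracted from the recurrence extended to negative indices.
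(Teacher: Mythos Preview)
Your proposal is correct, and your first method---plugging in the closed form $S_j(z) = (s^{j+1}-s^{-(j+1)})/(s-s^{-1})$ and simplifying---is exactly what the paper has in mind: it does not write out a proof but simply says the identity follows from that closed form (citing \cite{KTT}). Your inductive alternative is also fine and is a slight bonus, since it handles $z=\pm 2$ directly without the Zariski-density remark; either route suffices here.
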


\begin{lemma} \label{fix_n}
Fix $z \ge 2$. Then $\frac{S_{n}(z)}{S_{n-1}(z)}$ is a strictly decreasing function in $n \in \BN$.
\end{lemma}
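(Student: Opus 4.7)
The plan is to split into two cases according to whether $z=2$ or $z>2$, since the closed form for $S_n$ takes different shapes in these regimes.

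For the boundary case $z=2$, I would simply invoke the identity $S_n(2)=n+1$ recorded just before Lemma \ref{chev}: the ratio becomes $(n+1)/n = 1+1/n$, which is patently strictly decreasing in $n$.

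For the generic case $z>2$, the plan is to write $z = s + s^{-1}$ with $s>1$ and substitute the closed form $S_n(z) = (s^{n+1}-s^{-(n+1)})/(s-s^{-1})$, giving
$$\frac{S_n(z)}{S_{n-1}(z)} \;=\; \frac{s^{n+1}-s^{-(n+1)}}{s^n-s^{-n}}.$$
A short algebraic manipulation should then yield
$$\frac{S_n(z)}{S_{n-1}(z)} - s \;=\; \frac{s-s^{-1}}{s^{2n}-1},$$
and since $s>1$ makes $s^{2n}-1$ positive and strictly increasing in $n$, the right-hand side is strictly decreasing in $n$, which finishes the argument.

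The main obstacle is essentially nonexistent: the only mild care needed is verifying the algebraic identity in the second display and noting that $S_n(z)>0$ for all $n\ge 0$ when $z\ge 2$ (immediate from either the closed form or an easy induction on the recurrence), so the division is safe. As a bonus, the argument also reveals the limit $S_n(z)/S_{n-1}(z) \to s$ as $n \to \infty$, which seems likely to resurface when the asymptotic behavior of the representations is analyzed in Section \ref{LO}.
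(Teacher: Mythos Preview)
Your argument is correct, but the paper's proof proceeds differently and more briefly. Rather than splitting on $z=2$ versus $z>2$ and passing to the closed form in the parameter $s$, the paper simply invokes Lemma~\ref{chev}: from $S_n^2(z)-S_{n+1}(z)S_{n-1}(z)=1$ one immediately gets
\[
\frac{S_n(z)}{S_{n-1}(z)}-\frac{S_{n+1}(z)}{S_n(z)}=\frac{1}{S_n(z)S_{n-1}(z)}>0,
\]
the positivity following from $S_n(z)>0$ for $z\ge 2$. This handles both regimes at once and avoids the explicit $s$-substitution entirely. Your route, on the other hand, gives the sharper information that the ratio exceeds $s$ by exactly $(s-s^{-1})/(s^{2n}-1)$ and hence tends to $s$; while the $s$-parametrization is indeed used in the paper (in Lemma~\ref{p}), this particular limit does not resurface, so the extra precision is not actually needed downstream.
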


\begin{proof}
By Lemma \ref{chev} we have $S^2_n(z) - S_{n+1}(z) S_{n-1}(z) = 1$. Hence 
$\frac{S_{n}(z)}{S_{n-1}(z)} - \frac{S_{n+1}(z)}{S_{n}(z)}= \frac{1}{S_{n}(z)S_{n-1}(z)} > 0
$ for $z \ge 2$. The lemma follows. 
\end{proof}

\begin{lemma} \label{p}
Fix $n \in \BN$. Let $p(z) = \frac{S_{n}(z)}{S_{n-1}(z)}$ for $z \in (2,\infty)$. Then both $p$ and $\frac{z-2}{p-1}$ are both strictly increasing functions in $z$. 
\end{lemma}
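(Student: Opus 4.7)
The plan is to prove both monotonicity statements by induction on $n$, using the simple recurrence
\[
p_n(z) \;:=\; \frac{S_n(z)}{S_{n-1}(z)} \;=\; z - \frac{1}{p_{n-1}(z)}, \qquad p_1(z) = z,
\]
which follows immediately from $S_n = z S_{n-1} - S_{n-2}$. As a preliminary remark I would record that, writing $z = s + s^{-1}$ with $s > 1$, the formula $S_j(z) = (s^{j+1} - s^{-j-1})/(s - s^{-1})$ shows $S_j(z) > 0$ for every $j \geq 0$, so each $p_n(z)$ is strictly positive on $(2,\infty)$.

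For the first claim, the base case $p_1(z) = z$ is obvious. For the inductive step, assuming $p_{n-1}$ is strictly increasing on $(2, \infty)$ and using that $p_{n-1} > 0$, the function $1/p_{n-1}$ is strictly decreasing, so $p_n(z) = z - 1/p_{n-1}(z)$ is a sum of two strictly increasing functions, hence strictly increasing.

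For the second claim, I would introduce the auxiliary function $r_n(z) := (p_n(z) - 1)/(z - 2)$, so that $(z - 2)/(p_n(z) - 1) = 1/r_n(z)$. A one-line rearrangement of $p_n - 1 = (z-2) + (p_{n-1} - 1)/p_{n-1}$ yields
\[
r_n \;=\; 1 + \frac{r_{n-1}}{p_{n-1}}, \qquad r_1(z) = \frac{z-1}{z-2} = 1 + \frac{1}{z-2}.
\]
I would then prove by induction that $r_n$ is strictly decreasing on $(2, \infty)$ and satisfies $r_n > 1$ there. The base case is immediate. For the step, the hypothesis gives $r_{n-1}$ strictly decreasing and positive, while the first part of the lemma gives $p_{n-1}$ strictly increasing and positive; hence $r_{n-1}/p_{n-1}$ is strictly decreasing and positive, and adding $1$ preserves these properties. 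Since $r_n$ is positive and strictly decreasing, its reciprocal $(z-2)/(p_n - 1)$ is strictly increasing on $(2, \infty)$, completing the argument.

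I do not foresee a serious obstacle: the entire proof is a short two-line induction built directly from the Chebyshev recurrence, and the only algebraic maneuver is the derivation of the recurrence for $r_n$, which is an elementary manipulation.
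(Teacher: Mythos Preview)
Your proof is correct, but it proceeds along a genuinely different line from the paper's. The paper substitutes $z=s+s^{-1}$ with $s>1$, writes $p(z)=\dfrac{s^{n+1}-s^{-n-1}}{s^n-s^{-n}}$ and $\dfrac{z-2}{p-1}=1-\dfrac{s^{2n}+s}{s^{2n+1}+1}$ explicitly, and then verifies by direct differentiation in $s$ that each expression is strictly increasing. Your argument instead exploits the recurrence $p_n=z-1/p_{n-1}$ to run a two-step induction: first $p_n$ increasing (since $z$ and $-1/p_{n-1}$ both are), then $r_n=(p_n-1)/(z-2)=1+r_{n-1}/p_{n-1}$ decreasing (positive decreasing over positive increasing). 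The paper's route has the advantage of yielding closed-form expressions for both quantities, which could in principle be reused; your route is more elementary, needing no calculus beyond basic monotonicity of sums and quotients, and it makes transparent why the result holds uniformly in $n$. Both are short; yours is arguably cleaner for the purpose at hand.
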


\begin{proof} Write $z=s+s^{-1}$ where $s=\frac{1}{2}(z+\sqrt{z^2-4}) >1$. Then $p(z) = \frac{S_{n}(z)}{S_{n-1}(z)}=\frac{s^{n+1}-s^{-n-1}}{s^n-s^{-n}}$.

Note that $\frac{dp}{dz} = \frac{dp}{ds} \big/ \frac{dz}{ds} =  \frac{dp}{ds} \big/(1-s^{-2})$. To prove $p(z)$ is stricly increasing in $(2,\infty)$, it suffices to show $g(s):=\frac{s^{n+1}-s^{-n-1}}{s^n-s^{-n}}$ is a strictly increasing function in $(1,\infty)$. We have
\begin{eqnarray*}
s (s^n-s^{-n})^2 g'(s) &=&(n+1)(s^{n+1} + s^{-n-1})(s^n-s^{-n}) - n (s^{n+1}-s^{-n-1}) (s^{n} + s^{-n}) \\
&=& (s^{n+1} + s^{-n-1})(s^n-s^{-n}) - 2n (s-s^{-1}) \\
&>& 2(s^n-s^{-n}) - 2n (s-s^{-1}) >0.
\end{eqnarray*}
Here we used  $\frac{s^n-s^{-n}}{s-s^{-1}} = S_{n-1}(z) > S_{n-1}(2)=n$ in the last inequality. Hence $g'(s)>0$.

By a direct calculation we have 
$$
\frac{z-2}{p-1} =\frac{s^{-1}(s-1)^2}{(s-1)(s^n + s^{-n-1})/(s^{n}-s^{-n})} = \frac{(s-1)(s^{2n}-1)}{s^{2n+1} + 1} = 1 - \frac{s^{2n}+s}{s^{2n+1} + 1}.
$$
To prove $\frac{z-2}{p-1}$ is stricly increasing in $(2,\infty)$, it suffices to show that $h(s):=-\frac{s^{2n}+s}{s^{2n+1} + 1}$ is a strictly increasing function in $(1,\infty)$. 
We have
\begin{eqnarray*}
(s^{2n+1} + 1)^2 h'(s) &=&-(2n s^{2n-1}+1)(s^{2n+1}+1) + (2n+1)(s^{2n}+s) s^{2n} \\
&=& (s^{4n}-1) + 2n (s^2-1)s^{2n-1} > 0.
\end{eqnarray*}
 Hence $h'(s)>0$ and the lemma follows.
 \end{proof}

\subsection{$\bm{\mathrm{SL}_2(\BC)}$-representations} For a knot $K$ in $S^3$, let $G(K)$ denote the knot group of $K$ which is the fundamental group of the complement of an open tubular neighborhood of $K$. 

It is known that a pretzel link $P(a_1, a_2, \cdots, a_l)$ is isotopic to any pretzel link obtained by a cyclic permutation or reversing the order of $a_1, a_2, \cdots, a_l$. In the case $l=3$, this implies that $P(a_1, a_2, a_3)$ is isotopic to $P(a_{f(1)}, a_{f(2)}, a_{f(3)})$ for any bijection $f: \{1,2,3\} \to \{1,2,3\}$. 

From now on we consider the pretzel knot $P(a_1, a_2, a_3)$ where $a_1, a_2, a_3$ are positive odd integers. By the above argument we can always assume that $a_1 \le a_2 \le a_3$. 

Write $a_j = 2k_j +1$ for $j=1, 2, 3$.  An application of the Wirtinger algorithm to the knot diagram of $K :=P(a_1, a_2, a_3)$ in Figure \ref{fig:pretzel} shows that the knot group $G(K)$ has a presentation with 3 generators  $x_1, x_2, x_3$ and 3 relations
\begin{eqnarray*}
(x_2 x_3^{-1})^{k_1+1} x_3 (x_2 x_3^{-1})^{-k_1-1} &=& (x_1 x_2^{-1})^{k_3} x_1 (x_1 x_2^{-1})^{-k_3}, \\
(x_3 x_1^{-1})^{k_2+1} x_1 (x_3 x_1^{-1})^{-k_2-1} &=& (x_2 x_3^{-1})^{k_1} x_2 (x_2 x_3^{-1})^{-k_1}, \\
(x_1 x_2^{-1})^{k_3+1} x_2 (x_1 x_2^{-1})^{-k_3-1} &=& (x_3 x_1^{-1})^{k_2} x_3 (x_3 x_1^{-1})^{-k_2},
\end{eqnarray*}
where $x_1, x_2, x_3$ are the meridians depicted in Figure \ref{fig:pretzel}. Note that we actually just need 2 of the above 3 relations.

For a  representation $\rho: \la x_1, x_2, x_3 \ra \to \mathrm{SL}_2(\BC)$ we let $t_j :=\tr \rho(x_j)$, $r_1:= \tr \rho(x_2 x^{-1}_3)$, $r_2 := \tr \rho(x_3 x^{-1}_1)$, $r_3 := \tr \rho(x_1 x^{-1}_2)$ and $r :=\tr \rho(x_1 x_2 x_3)$. Let 
$$\sigma_1 := r_1 + r_2 + r_3, \quad \sigma_2 := r_1r_2 + r_2r_3 + r_3r_1, \quad \sigma_3 := r_1r_2r_3.$$ 
Note that if $\rho: \la x_1, x_2, x_3 \ra \to \mathrm{SL}_2(\BC)$ extends to a representation $\rho: G(K) \to \mathrm{SL}_2(\BC)$ then we must have $t_1=t_2=t_3$, since $x_1, x_2, x_3$ are conjugate to each other in $G(K)$.  

By \cite[Theorem 3.12]{Ha} a representation $\rho: \la x_1, x_2, x_3 \ra \to \mathrm{SL}_2(\BC)$, with $t_1 = t_2 = t_3 \not=0$ and $\sigma_1+2-2\gamma \not=0$, extends to an irreducible representation $\rho: G(K) \to \mathrm{SL}_2(\BC)$ if the following equations are satisfied
\begin{align}
(\gamma-2-r_j)S_{k_j}(r_j) &= (\sigma_1-r_j-\gamma)S_{k_j-1}(r_j), \quad j=1,2,3, \label{eq:Af1}\\
t^2 \left( \gamma^2-(\sigma_1+2)\gamma+\sigma_2+4 \right) &= 4+\sigma_3-2\sigma_2- \sigma_1^2, \label{eq:Af2}
\end{align}
where $t:= t_1$ and $\gamma := t^2 +1 - r/t$.

\subsection{$\bm{\mathrm{SL}_2(\BR)}$-representations} We  now study $\mathrm{SL}_2(\BR)$-representations of the pretzel knot $P(a_1, a_2, a_3)$ by finding real solutions of the system of equations \eqref{eq:Af1}--\eqref{eq:Af2}.

\subsubsection{Solving $r_1, r_2, r_3, \gamma$} We first consider real solutions of the system \eqref{eq:Af1}. Moreover we are interested in the case $r_1, r_2, r_3 \in (2, \infty)$. 

Let $p_j := \frac{S_{k_j}(r_j)}{S_{k_j-1}(r_j)}$ for  $j=1, 2, 3$. Then $p_j >1$ and  \eqref {eq:Af1} becomes $\gamma = \frac{\sigma_1 - r_j + (r_j+2) p_j}{p_j+1}$ for $j=1,2,3$. Hence the triple $(r_1, r_2, r_3) \in (2,\infty)^3$ satisfies 
$$
\frac{r_2+r_3+(r_1+2)p_1}{p_1+1} = \frac{r_3+r_1+(r_2+2)p_2}{p_2+1} = \frac{r_1+r_2+(r_3+2)p_3}{p_3+1}.
$$
It is easy to check that this system is equivalent to 
\begin{align}
&(r_1-2)(p_2-p_3)=(r_2-r_3)(p_2p_3-1), \label{eq:r1}\\
&(r_2-2)(p_1-p_3)=(r_1-r_3)(p_1p_3 -1), \label{eq:r2}\\
&(r_3-2)(p_1-p_2)=(r_1-r_2)(p_1p_2-1). \label{eq:r3}
\end{align}
For example, the equation $\frac{r_2+r_3+(r_1+2)p_1}{p_1+1} = \frac{r_3+r_1+(r_2+2)p_2}{p_2+1}$ is equivalent to \eqref{eq:r3}. Note that we actually just need two of the three equations \eqref{eq:r1}--\eqref{eq:r3}.

\begin{proposition} \label{existence}
Suppose $k_1 < k_3$. Then for every $r_1 \in (2, \infty)$, there exist a unique  $r_3 \in (2, r_1)$ such that the triple $(r_1, r_2, r_3)$, where $r_2 = 2+(r_1-r_3)\frac{p_1p_3-1}{p_1-p_3}$, satisfies equations \eqref{eq:r1}--\eqref{eq:r3}. Moreover $r_3$ is a  continuous function in $r_1 \in (2, \infty)$. 
\end{proposition}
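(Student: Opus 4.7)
The plan is to reduce the three-equation system \eqref{eq:r1}--\eqref{eq:r3} in the three unknowns $r_1, r_2, r_3$ to a single scalar equation in $r_3$ for fixed $r_1$, and then to solve it by the intermediate value theorem. Since only two of the three equations are independent and \eqref{eq:r2} is already being used to define $r_2$, it suffices to enforce, say, \eqref{eq:r1} as the single remaining condition.

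As a preliminary I would check that the proposed $r_2 = 2 + (r_1 - r_3)(p_1 p_3 - 1)/(p_1 - p_3)$ lies in $(2,\infty)$ whenever $r_3 \in (2, r_1)$. The numerator $p_1 p_3 - 1$ is positive since $p_j > 1$ for $r_j > 2$. For the denominator I combine Lemma \ref{fix_n} (which gives $p_{k_1}(r_3) > p_{k_3}(r_3)$ because $k_1 < k_3$) with Lemma \ref{p} (which gives $p_{k_1}(r_1) > p_{k_1}(r_3)$ because $r_1 > r_3$) to obtain $p_1 > p_3$; hence $r_2 > 2$ and consequently $p_2 > 1$ as well.

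Substituting this expression for $r_2$ into \eqref{eq:r1} yields a continuous function
\[
F(r_3) := (r_1-2)(p_2 - p_3) - (r_2 - r_3)(p_2 p_3 - 1), \qquad r_3 \in (2, r_1),
\]
whose zeros are exactly the values of $r_3$ for which $(r_1, r_2, r_3)$ satisfies the full system. A short algebraic computation at the right endpoint, where $r_2 \to 2$ and $p_2 \to (k_2+1)/k_2$, factors $F$ as $(r_1-2)(p_2-1)(1+p_3)$, which is strictly positive. At the left endpoint $r_3 \to 2^+$, putting $F$ over the common denominator $p_1 - p_3$ and simplifying, the numerator takes the form $(1 - p_3^2)(p_1 p_2 - 1)$, which is strictly negative. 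Hence the intermediate value theorem furnishes at least one zero $r_3 \in (2, r_1)$.

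The main obstacle is uniqueness. My plan is to show that $F$ is strictly monotone in $r_3$ for fixed $r_1$. This is delicate because $F$ depends on $r_3$ directly, through $p_3 = p_{k_3}(r_3)$, and also indirectly through $r_2$ and through $p_2 = p_{k_2}(r_2)$, and tracking the signs of all these contributions together appears to require the second assertion of Lemma \ref{p}, namely that $(z-2)/(p-1)$ is strictly increasing in $z$. Since this identity is precisely the one controlling the inverse relationship $r \leftrightarrow p$, I would attempt to rewrite the condition $F = 0$ in terms of the quantities $(r_j-2)/(p_j-1)$, producing a form in which each side is manifestly monotone. Once uniqueness is established, continuity of $r_3$ as a function of $r_1$ follows from a standard compactness-plus-uniqueness argument: any subsequential limit of $r_3(r_1^{(n)})$ along $r_1^{(n)} \to r_1^*$ must be a zero of $F$ at $r_1^*$ by joint continuity, and uniqueness then forces it to equal $r_3(r_1^*)$.
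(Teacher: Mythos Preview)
Your overall strategy matches the paper's: reduce to a single scalar equation in $r_3$ for fixed $r_1$, verify a sign change at the two endpoints to get existence via the intermediate value theorem, prove strict monotonicity to get uniqueness, and then deduce continuity. Your preliminary check that $p_1>p_3>1$ and hence $r_2>2$ is correct, and your endpoint computations for $F$ are correct as well (in particular the factorisation $(1-p_3^2)(p_1p_2-1)$ at $r_3\to 2^+$ is exactly right).

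The gap is exactly where you flag it: uniqueness. You have correctly identified that the right move is to recast the equation in terms of the quantities $(r_j-2)/(p_j-1)$, but you have not carried this out, and that computation is where essentially all the work lies. The paper does it by substituting into \eqref{eq:r3} rather than \eqref{eq:r1} (the two are equivalent once \eqref{eq:r2} fixes $r_2$, but the algebra from \eqref{eq:r3} is cleaner) and, through a short chain of equivalences, arrives at
\[
f(r_3)\;:=\;\frac{r_3-2}{p_3-1}\;-\;(r_1-r_3)\;-\;\frac{r_1-r_3}{p_2-1}\;-\;\frac{r_1-2}{p_1-1}\;=\;0.
\]
It then differentiates $f$ explicitly, tracking the dependence $r_3\mapsto r_2\mapsto p_2$ via the chain rule, and shows $\dfrac{df}{dr_3}>1$ using both conclusions of Lemma~\ref{p} (that $p$ and $(z-2)/(p-1)$ are increasing). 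So your plan is right, but the half-page derivative computation still needs to be written down; without it you have existence only. For continuity the paper invokes the implicit function theorem (legitimate since $df/dr_3>1$), whereas your compactness-plus-uniqueness argument is also valid once you rule out subsequential limits at the endpoints $2$ and $r_1^*$, which your endpoint sign analysis already does.
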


\begin{proof}
Fix $r_1 \in (2, \infty)$. Consider $r_3 \in (2, r_1)$. Since $1 \le k_1 < k_3$ and $2 < r_3 < r_1$, by Lemmas \ref{fix_n}--\ref{p} we have $p_1 = \frac{S_{k_1}(r_1)}{S_{k_1-1}(r_1)}  > \frac{S_{k_3}(r_3)}{S_{k_3-1}(r_3)} = p_3>1$.  From \eqref{eq:r2} we get
$$r_2 = 2+(r_1-r_3)\frac{p_3p_1-1}{p_1-p_3}>2.$$

Since $r_1 - r_2 = r_1 - 2 - (r_1-r_3)\frac{p_1p_3-1}{p_1-p_3} = r_3 -2 - (r_1-r_3)\left( \frac{p_1p_3-1}{p_1-p_3} -1 \right) = r_3 -2 - (r_1-r_3)  \frac{(p_1+1)(p_3-1)}{p_1-p_3}$,  equation \eqref{eq:r3} then becomes
\begin{align*}
&(r_3-2)(p_1-p_2)=\left( r_3 -2 - (r_1-r_3)  \frac{(p_1+1)(p_3-1)}{p_1-p_3} \right)(p_1p_2-1)\\
\iff & (r_1-r_3)\frac{(p_1+1)(p_3-1)(p_1p_2-1)}{p_1 - p_3}-(r_3-2)(p_1+1)(p_2-1)=0 \\
\iff& (r_1-r_3)\frac{p_1p_2-1}{p_2-1}- (r_3-2) \frac{p_1 - p_3}{p_3 -1} =0\\
\iff& (r_1-r_3)(\frac{(p_1-1)p_2}{p_2-1} + 1)  -(r_3-2) (\frac{p_1 - 1}{p_3 -1}- 1) =0\\
\iff& (r_1-r_3)\frac{(p_1-1)p_2}{p_2-1}   -(r_3-2) \frac{p_1 - 1}{p_3 -1} + r_1 -2 =0\\
\iff&\frac{r_3-2}{p_3-1}-(r_1-r_3)\left( 1 + \frac{1}{p_2-1} \right) -\frac{r_1-2}{p_1-1} =0.
\end{align*}

Consider the  function
$f:(2,r_1)\to \mathbb{R}$
defined by
$$f(r_3) =   \frac{r_3-2}{p_3-1} - (r_1-r_3) - \frac{r_1 - r_3}{p_2 - 1} - \frac{r_1-2}{p_1-1}.$$
We claim that $f$ is a strictly increasing function in $r_3 \in (2,r_1)$. Indeed, we have
\begin{align*}
\frac{df}{dr_3} &= \frac{d}{dr_3} \left(\frac{r_3-2}{p_3-1} \right) + 1 -\frac{d}{d r_3}\left(\frac{r_1-r_3}{p_2-1}\right) \\
&=  \frac{d}{dr_3} \left(\frac{r_3-2}{p_3-1} \right) + 1 + \frac{(r_1-r_3)\frac{d p_2}{d r_2}\frac{d r_2}{d r_3}+(p_2-1)}{(p_2-1)^2},
\end{align*}
and
\begin{align*}
\frac{d r_2}{d r_3}&=\frac{d}{d r_3}\left(2+(r_1-r_3)\frac{p_1p_3 - 1}{p_1-p_3}\right)\\
&= (r_1-r_3)\frac{d}{d r_3}\left(\frac{p_1p_3 - 1}{p_1-p_3}\right) -\frac{p_1p_3 - 1}{p_1-p_3}\\
&=\frac{(r_1-r_3)(p^2_1-1)}{(p_1-p_3)^2}  \frac{d p_3}{d r_3} -\frac{r_2-2}{r_1-r_3}.
\end{align*}
Hence, by noting that $\frac{(p_2-1) - (r_2-2)\frac{d p_2}{d r_2}}{(p_2-1)^2} = \frac{d}{dr_2}\left( \frac{r_2-2}{p_2-1} \right)$, we obtain 
$$
\frac{df}{dr_3} 
=  \frac{d}{dr_3} \left(\frac{r_3-2}{p_3-1} \right) + 1 + \frac{(r_1 - r_3)^2 (p^2_1-1)}{(p_1-p_3)^2(p_2-1)^2}  \frac{d p_3}{d r_3}  \frac{d p_2}{d r_2} +\frac{d}{dr_2}\left( \frac{r_2-2}{p_2-1} \right)>1.
$$
Here we used the facts that $\frac{d p_2}{d r_2} > 0$, $\frac{d p_3}{d r_3}>0$, $\frac{d}{dr_2}\left( \frac{r_2-2}{p_2-1} \right)>0$ and $\frac{d}{dr_3}\left( \frac{r_3-2}{p_3-1} \right)>0$ in the last inequality. These facts follow from Lemma \ref{p}. 

We have proved that $f(r_3)$ is a strictly increasing function in $r_3 \in (2,r_1)$. Note that
\begin{eqnarray*}
\lim_{r_3 \to 2^+} f(r_3) &= -(r_1-2) (\frac{p_2}{p_2-1}+\frac{1}{p_1-1})< 0, \\
 \lim_{r_3 \to r_1^-} f(r_3) &= (r_1-2) (\frac{1}{p_3-1}-\frac{1}{p_1-1})>0.
 \end{eqnarray*}
 Hence there exists a unique $r_3 \in (2,r_1)$ such that $f(r_3)=0$. By the Implicit Function Theorem, $r_3$ is a continuous function in $r_1 \in (2, \infty)$. 
\end{proof}

From now on, we consider the triple $(r_1, r_2, r_3) \in (2,\infty)^3$ in Proposition \ref{existence} when 
$k_1 < k_3$. If $k_1=k_3$, by letting $r_1=r_2=r_3 > 2$ the equations \eqref{eq:r1}-\eqref{eq:r3} are satisfied. 

\begin{lemma} \label{triangle}
We have $r_1 + r_2 > r_3 +2$, $r_2 + r_3 > r_1 +2$ and $r_3 + r_1 > r_2 +2$.
\end{lemma}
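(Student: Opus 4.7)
The plan is to verify the three inequalities separately, first disposing of the degenerate case $k_1=k_3$ (which by the ordering $k_1\le k_2\le k_3$ forces $k_1=k_2=k_3$, so that we may take $r_1=r_2=r_3>2$ and every triangle inequality reduces to the trivial $r>2$), and then working in the non-degenerate case $k_1<k_3$ of Proposition~\ref{existence}. There we already have $2<r_3<r_1$, the ordering $p_1>p_3>1$ and $p_2>1$, and the closed form
\begin{equation*}
r_2-2=(r_1-r_3)\,\frac{p_1p_3-1}{p_1-p_3}
\end{equation*}
coming from \eqref{eq:r2}.

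The first inequality $r_1+r_2>r_3+2$ is immediate from $r_1>r_3$ and $r_2>2$. For the second, $r_2+r_3>r_1+2$, I would rewrite it as $r_2-2>r_1-r_3$, plug in the closed form, and divide through by $r_1-r_3>0$; the problem collapses to $(p_1p_3-1)/(p_1-p_3)>1$, which after clearing denominators is exactly $(p_1+1)(p_3-1)>0$, true since $p_3>1$.

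The third inequality $r_1+r_3>r_2+2$, equivalently $r_1-2>r_2-r_3$, is the main obstacle, because we do not have a direct bound on $r_2$ from above (in particular we have not shown $r_2<r_1$). My plan is to split on the sign of $r_2-r_3$: if $r_2\le r_3$ the inequality is trivial from $r_1>2$, so it remains to treat $r_2>r_3$. In that case I would now invoke equation~\eqref{eq:r1}, which the proposition guarantees also holds for our triple: $(r_1-2)(p_2-p_3)=(r_2-r_3)(p_2p_3-1)$. Since $r_1-2$, $r_2-r_3$ and $p_2p_3-1$ are all positive, this forces $p_2>p_3$, and a direct manipulation then gives
\begin{equation*}
(r_1-2)-(r_2-r_3)=(r_1-2)\left(1-\frac{p_2-p_3}{p_2p_3-1}\right)=(r_1-2)\,\frac{(p_2+1)(p_3-1)}{p_2p_3-1}>0,
\end{equation*}
which is what is needed. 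The conceptual point I want to highlight is that equation \eqref{eq:r1}, although not used in the construction of $r_2$ or in pinning down $r_3$ via $f(r_3)=0$, is the right relation to invoke for this last inequality; trying to feed $f(r_3)=0$ directly leads to a much messier bound.
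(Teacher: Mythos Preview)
Your proof is correct and follows essentially the same route as the paper's: both arguments exploit the relations \eqref{eq:r1}--\eqref{eq:r3} together with the elementary fact that $\dfrac{xy-1}{x-y}>1$ for $x>y>1$ (which you write in the factored form $(x+1)(y-1)>0$). The only cosmetic differences are that the paper handles both signs at once via absolute values, noting $\left|\dfrac{xy-1}{x-y}\right|>1$ for $x,y>1$, and for the third inequality splits on whether $p_1\ne p_2$ or $p_2\ne p_3$ rather than on the sign of $r_2-r_3$; your case split and the paper's lead to the same computation.
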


\begin{proof}
The case $k_1=k_3$ is obvious, so we only consider the case $k_1 < k_3$. 
Note that if $x,y>1$ then $\left|\frac{xy-1}{x-y} \right| >1$. Since $r_2 = 2 + (r_1-r_3)\frac{p_1p_3-1}{p_1-p_3}$ we have $r_2 - 2 = |r_1-r_3| \left|\frac{p_1p_3-1}{p_1-p_3} \right|>|r_1-r_3|$. This implies that $r_2 + r_1 > r_3 +2$ and $r_2 + r_3 > r_1 +2$.

Since $p_1 \not= p_3$ we have either $p_1 \not= p_2$ or $p_3 \not= p_2$. If $p_1 \not= p_2$ then  $r_3 - 2 = |r_1-r_2| \left|\frac{p_1p_2-1}{p_1-p_2} \right|>|r_1-r_2|$, which implies that $r_3 + r_1 > r_2+2$. 
Similarly, if $p_1 \not= p_3$ then  $r_1 - 2 = |r_2-r_3| \left|\frac{p_2p_3-1}{p_2-p_3} \right|>|r_2-r_3|$, which also implies that $r_3 + r_1 > r_2+2$. 
\end{proof}

\begin{lemma} \label{ratio}
We have
\begin{eqnarray*}
\lim_{r_1 \to 2^+} \frac{r_2 -2}{r_1 -2} &=& \frac{1+k_1 + k_3}{1+k_2 + k_3}, \\
\lim_{r_1 \to 2^+}  \frac{r_3 -2}{r_1 -2}  &=& \frac{1+k_1 + k_2}{1+k_2 + k_3}.
\end{eqnarray*}
\end{lemma}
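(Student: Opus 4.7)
The plan is to exploit the equation $f(r_3)=0$ from the proof of Proposition~\ref{existence} together with the explicit formula for $r_2$ in terms of $r_1,r_3$, and to take the limit carefully using a subsequence argument.

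\textbf{Setup.} First reduce to $k_1<k_3$: if $k_1=k_3$ then, since $k_1\le k_2\le k_3$, we have $k_1=k_2=k_3$, so by symmetry $r_1=r_2=r_3$ and both ratios are identically $1$, matching the formula $\frac{1+2k}{1+2k}=1$. Now assume $k_1<k_3$. Since $r_3\in(2,r_1)$, we have $r_3\to 2^+$ as $r_1\to 2^+$, and by Lemma~\ref{triangle}, $r_2-2<(r_1-2)+(r_3-2)<2(r_1-2)$, so also $r_2\to 2^+$. Moreover, the ratios $\alpha(r_1):=\frac{r_2-2}{r_1-2}$ and $\beta(r_1):=\frac{r_3-2}{r_1-2}$ are bounded in $(0,2)$ and $(0,1)$ respectively. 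Using $S_n(2)=n+1$, we have $p_j\to\frac{k_j+1}{k_j}$ and hence $p_j-1\to\frac{1}{k_j}$ as $r_j\to 2^+$.

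\textbf{Step 1: Compute $\lim\beta$.} Recall from the proof of Proposition~\ref{existence} that $r_3$ is characterized by $f(r_3)=0$, where
\[
f(r_3)=\frac{r_3-2}{p_3-1}-(r_1-r_3)-\frac{r_1-r_3}{p_2-1}-\frac{r_1-2}{p_1-1}.
\]
Dividing by $r_1-2$ and using $\frac{r_1-r_3}{r_1-2}=1-\beta$ yields
\[
\frac{\beta}{p_3-1}-(1-\beta)-\frac{1-\beta}{p_2-1}-\frac{1}{p_1-1}=0.
\]
Given any sequence $r_1^{(n)}\to 2^+$, boundedness of $\beta$ lets us extract a subsequence along which $\beta(r_1^{(n)})\to\beta^*$. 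Along this subsequence $r_2,r_3\to 2$, so the displayed equation passes to the limit:
\[
k_3\beta^*-(1-\beta^*)-k_2(1-\beta^*)-k_1=0,
\]
which uniquely determines $\beta^*=\frac{1+k_1+k_2}{1+k_2+k_3}$. Since every subsequential limit equals this value, $\lim_{r_1\to 2^+}\beta(r_1)=\frac{1+k_1+k_2}{1+k_2+k_3}$.

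\textbf{Step 2: Compute $\lim\alpha$.} The definition $r_2=2+(r_1-r_3)\frac{p_1p_3-1}{p_1-p_3}$ gives
\[
\alpha(r_1)=(1-\beta(r_1))\cdot\frac{p_1p_3-1}{p_1-p_3}.
\]
As $r_1,r_3\to 2^+$, direct computation using $p_j-1\to 1/k_j$ yields
\[
\frac{p_1p_3-1}{p_1-p_3}\longrightarrow \frac{(k_1+k_3+1)/(k_1k_3)}{(k_3-k_1)/(k_1k_3)}=\frac{1+k_1+k_3}{k_3-k_1},
\]
and $1-\beta(r_1)\to\frac{k_3-k_1}{1+k_2+k_3}$. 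Multiplying gives $\lim\alpha=\frac{1+k_1+k_3}{1+k_2+k_3}$.

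\textbf{Main obstacle.} The only nontrivial issue is that Proposition~\ref{existence} provides only continuity (not differentiability) of $r_3$ as a function of $r_1$, so one cannot a priori assert that $\beta$ has a limit. This is circumvented by the subsequence argument above: boundedness yields subsequential limits, and the limiting form of $f(r_3)=0$ uniquely pins down the value, forcing a genuine limit. Once $\lim\beta$ is established, $\lim\alpha$ follows from the closed-form expression for $r_2-2$.
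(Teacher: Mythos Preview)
Your proof is correct, and it proceeds along a route that differs from the paper's in a useful way. The paper works directly with the two relations \eqref{eq:r2} and \eqref{eq:r3}: from these it extracts
\[
\lim_{r_1\to 2^+}\frac{r_1-r_2}{r_3-2}=\frac{k_2-k_1}{k_1+k_2+1},\qquad
\lim_{r_1\to 2^+}\frac{r_1-r_3}{r_2-2}=\frac{k_3-k_1}{k_1+k_3+1},
\]
then writes $q_2=\lim\frac{r_2-2}{r_1-2}$, $q_3=\lim\frac{r_3-2}{r_1-2}$ and solves the resulting $2\times 2$ linear system $\frac{1-q_2}{q_3}=\frac{k_2-k_1}{k_1+k_2+1}$, $\frac{1-q_3}{q_2}=\frac{k_3-k_1}{k_1+k_3+1}$. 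Existence of the limits is taken for granted (implicitly via the same uniqueness-of-subsequential-limit reasoning you spell out). By contrast, you substitute first: you use the already-reduced equation $f(r_3)=0$ from Proposition~\ref{existence} to obtain a single linear equation determining $\beta^*$ alone, and then recover $\alpha$ a posteriori from the closed-form expression for $r_2-2$. Your approach has the advantage that the existence issue is handled explicitly via the subsequence argument, and that one never has to solve a coupled system; the paper's approach is a bit more symmetric in $r_2,r_3$ and avoids re-importing the auxiliary function $f$. Both are short and rest on the same underlying identities.
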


\begin{proof}
The case $k_1=k_3$ is obvious, so it suffices to consider the case $k_1 < k_3$. Since $r_3 \in (2, r_1)$ and $r_1 + r_3 -2 > r_2 >2$, we have $r_3, r_2 \to 2^+$ as $r_1 \to 2^+$. This implies that $p_j \to \frac{S_{k_j}(2) }{S_{k_j -1}(2) }= \frac{k_j +1}{k_j} = 1 +  \frac{1}{k_j}$ as $r_1 \to 2^+$. Hence
\begin{equation} \label{eq1}
\lim_{r_1 \to 2^+} \frac{r_1 - r_2}{r_3 -2} = \lim_{r_1 \to 2^+} \frac{p_1-p_2}{p_1p_2-1} =  \frac{k_2 - k_1}{k_1 + k_2 +1}. 
\end{equation}
Similarly, we have 
\begin{equation}
\lim_{r_1 \to 2^+} \frac{r_1 - r_3}{r_2 -2} =  \frac{k_3 - k_1}{k_1 + k_3 +1}. \label{eq2}
\end{equation}

Let $q_2 = \lim_{r_1 \to 2^+} \frac{r_2 -2}{r_1 -2}$ and $q_3 = \lim_{r_1 \to 2^+} \frac{r_3 -2}{r_1 -2}$. From \eqref{eq1} and \eqref{eq2} we have 
$$
\frac{1 - q_2}{q_3} =  \frac{k_2 - k_1}{k_1 + k_2 +1}
\qquad \text{and} \qquad
\frac{1 - q_3}{q_2} =  \frac{k_3 - k_1}{k_1 + k_3 +1}.
$$
Hence 
$
q_2 = \frac{1+k_1 + k_3}{1+k_2 + k_3}$ 
and  $q_3  = \frac{1+k_1 + k_2}{1+k_2 + k_3}$ as claimed. 
\end{proof}

Let $\gamma := \frac{r_2+r_3+(r_1+2)p_1}{p_1+1}$ and $r :=x^3+x-x\gamma$. Then the system \eqref {eq:Af1} is satisfied. 

\begin{lemma} \label{sg}
We have $\sigma_1 +2 - 2\gamma >0$. 
\end{lemma}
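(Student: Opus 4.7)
The plan is to substitute the definition $\gamma = \frac{r_2+r_3+(r_1+2)p_1}{p_1+1}$ directly into the expression $\sigma_1 + 2 - 2\gamma$ and simplify. Putting everything over the common denominator $p_1+1$, the numerator becomes
\[
(r_1+r_2+r_3+2)(p_1+1) - 2(r_2+r_3) - 2(r_1+2)p_1.
\]
I would then collect terms by powers of $p_1$: the coefficient of $p_1$ is $(r_1+r_2+r_3+2) - 2(r_1+2) = r_2+r_3-r_1-2$, while the constant piece is $(r_1+r_2+r_3+2) - 2(r_2+r_3) = -(r_2+r_3-r_1-2)$. So the numerator factors as $(p_1-1)(r_2+r_3-r_1-2)$, giving the clean identity
\[
\sigma_1 + 2 - 2\gamma \;=\; \frac{(p_1-1)(r_2+r_3-r_1-2)}{p_1+1}.
\]

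From here the conclusion is immediate from results already in hand. Since $r_1 > 2$, Lemma \ref{fix_n} (together with the observation that $p_j>1$, which was recorded at the start of Section 2.3.1) gives $p_1 > 1$, so both $p_1-1$ and $p_1+1$ are positive. The factor $r_2+r_3-r_1-2$ is positive by Lemma \ref{triangle}. Hence the right-hand side is strictly positive, proving the lemma.

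I do not anticipate any real obstacle. The only place one has to be slightly careful is the factorization step, but it is a one-line algebraic rearrangement. The whole argument should fit in three or four lines, with the key inputs being the definition of $\gamma$, the bound $p_1>1$ from Lemma \ref{fix_n}, and the triangle-like inequality of Lemma \ref{triangle}.
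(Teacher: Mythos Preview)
Your proposal is correct and follows essentially the same route as the paper: both substitute the definition of $\gamma$, simplify to the identity
\[
\sigma_1 + 2 - 2\gamma = \frac{(p_1-1)(r_2+r_3-r_1-2)}{p_1+1},
\]
and then invoke $p_1>1$ together with Lemma~\ref{triangle} to conclude positivity.
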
 

\begin{proof}
This is because
$$
\sigma_1 +2 - 2\gamma = r_2 + r_2 + r_3 +2 - \frac{r_2+r_3+(r_1+2)p_1}{p_1+1}=\frac{(r_2 + r_3-r_1-2)(p_1 - 1)}{p_1+1}.
$$
and $r_2 + r_3-r_1-2 >0$ (by Lemma \ref{triangle}).
\end{proof}

\subsubsection{Trace of meridian} We now study the equation \eqref{eq:Af2}. Recall that $\sigma_1 = r_1 + r_2 + r_3$, $\sigma_2 = r_1r_2 + r_2r_3 + r_3r_1$ and $\sigma_3 = r_1r_2r_3$.   

Let $\delta:=r_1 r_2 r_3+4 -r_1^2- r_2^2 - r_3^2=4+\sigma_3-2\sigma_2- \sigma_1^2$ be the RHS of  \eqref{eq:Af2}. 

\begin{lemma} \label{delta}
We have $\delta > (r_1-2)(r_2-2)(r_3-2)>0$.
\end{lemma}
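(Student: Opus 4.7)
The plan is to make the substitution $a:=r_1-2$, $b:=r_2-2$, $c:=r_3-2$, which turns both sides of the desired double inequality into elementary symmetric expressions and allows Lemma~\ref{triangle} to be applied directly. First, since the triple $(r_1,r_2,r_3)$ lives in $(2,\infty)^3$ we have $a,b,c>0$, so $(r_1-2)(r_2-2)(r_3-2)=abc>0$; this settles the second inequality.

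For the main inequality $\delta>(r_1-2)(r_2-2)(r_3-2)$, I would just expand. Writing $r_j=(r_j-2)+2$ and grinding through the product and the sum of squares, one obtains
\begin{equation*}
(r_1-2)(r_2-2)(r_3-2)=r_1r_2r_3-2(r_1r_2+r_2r_3+r_3r_1)+4(r_1+r_2+r_3)-8,
\end{equation*}
so
\begin{equation*}
\delta-(r_1-2)(r_2-2)(r_3-2)=2(ab+bc+ca)-(a^2+b^2+c^2),
\end{equation*}
after the $r_1r_2r_3$ terms cancel and the linear terms in $a+b+c$ collapse. Thus the problem reduces to establishing
\begin{equation*}
a^2+b^2+c^2<2(ab+bc+ca).
\end{equation*}

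Viewing the right-hand side minus the left as a quadratic in $a$ (or symmetrically, completing the square), one gets the factorization
\begin{equation*}
2(ab+bc+ca)-(a^2+b^2+c^2)=4bc-(b+c-a)^2=\bigl((\sqrt{b}+\sqrt{c})^2-a\bigr)\bigl(a-(\sqrt{b}-\sqrt{c})^2\bigr).
\end{equation*}
It therefore suffices to show $(\sqrt{b}-\sqrt{c})^2<a<(\sqrt{b}+\sqrt{c})^2$, i.e.\ that $\sqrt{a},\sqrt{b},\sqrt{c}$ form a (strict) triangle.

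This is where Lemma~\ref{triangle} enters. Translating its three inequalities through $r_j=a_j+2$ gives exactly the strict triangle inequalities $a+b>c$, $b+c>a$, $c+a>b$ for $(a,b,c)$. For the upper bound, $a<b+c<b+c+2\sqrt{bc}=(\sqrt{b}+\sqrt{c})^2$. For the lower bound, assume WLOG $b\ge c$; then $(\sqrt{b}-\sqrt{c})^2=(b-c)\tfrac{\sqrt{b}-\sqrt{c}}{\sqrt{b}+\sqrt{c}}<b-c<a$, the last step using $a+c>b$. Both factors are thus strictly positive and the proof is complete.

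The only nontrivial step is spotting the right reformulation: namely, that $\delta-(r_1-2)(r_2-2)(r_3-2)$ is positive precisely when $\sqrt{r_1-2},\sqrt{r_2-2},\sqrt{r_3-2}$ form a triangle, a condition strictly weaker than the triangle inequality on $(r_1-2,r_2-2,r_3-2)$ already provided by Lemma~\ref{triangle}. Everything else is routine algebra.
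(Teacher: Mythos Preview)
Your proof is correct and follows essentially the same route as the paper: the substitution $a=r_1-2$, $b=r_2-2$, $c=r_3-2$ reduces the claim to $2(ab+bc+ca)>a^2+b^2+c^2$, which is then deduced from the strict triangle inequalities on $(a,b,c)$ furnished by Lemma~\ref{triangle}. The only difference is in that last step: the paper simply regroups
\[
2(ab+bc+ca)-(a^2+b^2+c^2)=(a+b-c)c+(b+c-a)a+(c+a-b)b>0,
\]
whereas you pass through the Heron-type factorization $4bc-(b+c-a)^2$ and the square-root triangle; your detour is unnecessary but harmless.
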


\begin{proof}
Let $a_j = r_j -2>0$. By Lemma \ref{triangle} we have $a_1+a_2 > a_3$, $a_2 + a_3 > a_1$ and $a_3 + a_1 > a_2$. Hence
\begin{eqnarray*}
\delta &=& (a_1+2)(a_2+2)(a_3+2)+4-(a_1+2)^2-(a_2+2)^2-(a_3+2)^2 \\
&=& a_1 a_2 a_3 + 2a_1 a_2 + 2a_2 a_3 + 2a_3 a_1 - a_1^2 - a_2^2 - a_3^2 \\
&=& a_1 a_2 a_3 + (a_1+a_2-a_3)a_3 + (a_2 + a_3 - a_1)a_1 + (a_3 + a_1-a_2)a_2 \\ &>& a_1a_2 a_3 > 0.
\end{eqnarray*}
The lemma follows.
\end{proof}

For the LHS of \eqref{eq:Af2} we have the following.

\begin{lemma} \label{LHS}
We have 
$$
\gamma^2-(\sigma_1+2)\gamma+\sigma_2+4 = (r_2-2)(r_3-2) - \frac{(r_2+r_3-r_1-2)^2p_1}{(p_1 +1)^2}.
$$
\end{lemma}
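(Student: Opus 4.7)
The plan is to treat the left-hand side as a quadratic in $\gamma$ and factor it cleverly using the defining relation $\gamma = \frac{r_2+r_3+(r_1+2)p_1}{p_1+1}$. Two particular shifts of $\gamma$ will become the key building blocks.

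First I would compute the two ``distinguished'' differences
\[
\gamma - (r_1+2) = \frac{r_2+r_3+(r_1+2)p_1 - (r_1+2)(p_1+1)}{p_1+1} = \frac{r_2+r_3-r_1-2}{p_1+1},
\]
\[
\gamma - (r_2+r_3) = \frac{r_2+r_3+(r_1+2)p_1 - (r_2+r_3)(p_1+1)}{p_1+1} = -\frac{(r_2+r_3-r_1-2)\,p_1}{p_1+1}.
\]
Their product is then exactly $-\frac{(r_2+r_3-r_1-2)^2 p_1}{(p_1+1)^2}$, which is the second term on the right-hand side of the claim. So I only need to check that the left-hand side equals this product plus $(r_2-2)(r_3-2)$.

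Next I would expand
\[
(\gamma-r_1-2)(\gamma-r_2-r_3) = \gamma^2 - (r_1+r_2+r_3+2)\gamma + (r_1+2)(r_2+r_3) = \gamma^2-(\sigma_1+2)\gamma+(r_1+2)(r_2+r_3).
\]
Comparing with the left-hand side of the lemma, what is missing is the constant correction
\[
\sigma_2+4 - (r_1+2)(r_2+r_3) = r_1r_2+r_2r_3+r_3r_1+4-r_1r_2-r_1r_3-2r_2-2r_3 = r_2r_3-2r_2-2r_3+4 = (r_2-2)(r_3-2),
\]
which is exactly the first term on the right-hand side.

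Assembling the pieces,
\[
\gamma^2-(\sigma_1+2)\gamma+\sigma_2+4 = (\gamma-r_1-2)(\gamma-r_2-r_3) + (r_2-2)(r_3-2)
\]
and substituting the two difference formulas yields the claimed identity. The computation is entirely algebraic and uses only the definition of $\gamma$ together with the definitions of $\sigma_1,\sigma_2$, so there is no real obstacle; the only ``trick'' is spotting the factorization $(\gamma-r_1-2)(\gamma-r_2-r_3)$, which is natural once one notices that $(r_1+2)+(r_2+r_3)=\sigma_1+2$.
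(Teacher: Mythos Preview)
Your proof is correct and is essentially the same direct algebraic verification as the paper's, with one small twist worth mentioning. The paper factors $\gamma^2-(\sigma_1+2)\gamma$ as $\gamma\cdot\bigl(\gamma-(\sigma_1+2)\bigr)$, computes $\gamma-(\sigma_1+2)=-\frac{r_1+2+(r_2+r_3)p_1}{p_1+1}$, and must then simplify the product $\frac{(r_1+2+(r_2+r_3)p_1)(r_2+r_3+(r_1+2)p_1)}{(p_1+1)^2}$ to extract the $(r_2+r_3-r_1-2)^2 p_1$ term; after that it uses the same constant correction $\sigma_2+4-(r_1+2)(r_2+r_3)=(r_2-2)(r_3-2)$ that you use. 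Your choice to split $\sigma_1+2=(r_1+2)+(r_2+r_3)$ and factor the quadratic as $(\gamma-r_1-2)(\gamma-r_2-r_3)$ is slightly slicker, since each factor simplifies to a single multiple of $r_2+r_3-r_1-2$ and the product falls out immediately without the intermediate expansion the paper performs.
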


\begin{proof}
We have $\gamma- (\sigma_1+2)= \frac{r_2+r_3+(r_1+2)p_1}{p_1+1} - (r_2 + r_3 + r_1 +2)= - \frac{r_1+2+(r_2+r_3)p_1}{p_1+1}$. This implies that
\begin{eqnarray*}
\gamma^2-(\sigma_1+2)\gamma &=&  - \frac{r_1+2+(r_2+r_3)p_1}{p_1+1} \cdot \frac{r_2+r_3+(r_1+2)p_1}{p_1+1} \\
&=& - \left((r_1+2)(r_2+r_3) +\frac{(r_2+r_3-r_1-2)^2 p_1}{(p_1 +1)^2} \right).
\end{eqnarray*}
The lemma then follows, since $\sigma_2 +4 - (r_1+2)(r_2+r_3)=(r_2-2)(r_3-2)$.
\end{proof}

Since $S^2_{k_1}(r_1) + S^2_{k_1-1}(r_1) =  r_1 S_{k_1}(r_1)S_{k_1-1}(r_1) +1$ (by Lemma \ref{chev}) we have  $$\frac{(p_1+1)^2}{p_1} = \frac{S_{k_1}(r_1)}{S_{k_1-1}(r_1)} + \frac{S_{k_1-1}(r_1)}{S_{k_1}(r_1)} + 2 = \frac{1}{S_{k_1}(r_1)S_{k_1-1}(r_1)} + r_1 +2 > r_1 +2.$$
This implies that $\frac{p_1}{(p_1 +1)^2} < \frac{1}{r_1 +2}$. Hence, by Lemma \ref{LHS} we obtain
\begin{equation} \label{LHS-ineq}
\gamma^2-(\sigma_1+2)\gamma+\sigma_2+4 > (r_2-2)(r_3-2) - \frac{(r_2+r_3-r_1-2)^2}{r_1+2} = \frac{\delta}{r_1+2} > 0. 
\end{equation} 

Let $T =  \frac{\delta}{\gamma^2-(\sigma_1+2)\gamma+\sigma_2+4} > 0$ and $t = \pm \sqrt{T}$. Then the equation \eqref{eq:Af2} is satisfied. 

\begin{lemma} \label{t^2}
We have $r_1 -2 < T < r_1 +2$. 
\end{lemma}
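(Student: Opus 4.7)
The plan is that both bounds are essentially at hand from material already developed in the section. For the upper bound $T < r_1+2$, the inequality \eqref{LHS-ineq} derived immediately before the lemma reads $\gamma^2-(\sigma_1+2)\gamma+\sigma_2+4 > \delta/(r_1+2)$. Since both sides of this inequality are positive, dividing $\delta$ by each side and reversing the inequality gives $T = \delta/(\gamma^2-(\sigma_1+2)\gamma+\sigma_2+4) < r_1+2$ with no further work.

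For the lower bound $T > r_1-2$, I would first observe that the denominator of $T$ is positive by \eqref{LHS-ineq} and that $r_1-2>0$, so the inequality is equivalent to
$$\delta \;>\; (r_1-2)\bigl(\gamma^2-(\sigma_1+2)\gamma+\sigma_2+4\bigr).$$
Then, applying the identity of Lemma \ref{LHS}, the right-hand side rewrites as
$$(r_1-2)(r_2-2)(r_3-2) \;-\; (r_1-2)\,\frac{(r_2+r_3-r_1-2)^2\,p_1}{(p_1+1)^2}.$$
Since $r_1>2$ and $p_1>0$, the subtracted term is nonnegative, so the whole expression is bounded above by $(r_1-2)(r_2-2)(r_3-2)$, which is strictly less than $\delta$ by Lemma \ref{delta}. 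Chaining these two estimates produces the desired strict inequality.

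I do not expect any real obstacle: the two hard identities (Lemma \ref{LHS}) and the sharp triangle-type estimate (Lemma \ref{delta}) have already been established, and the proof simply reads the symmetric bracketing $r_1-2 < T < r_1+2$ off of them. The mild asymmetry between the two halves is natural: the upper bound exploits the sharp replacement of $p_1/(p_1+1)^2$ by $1/(r_1+2)$ that was already used to obtain \eqref{LHS-ineq}, while the lower bound comes from discarding the nonnegative correction term in Lemma \ref{LHS} and invoking the slack in $\delta$ produced by the strict triangle inequalities of Lemma \ref{triangle}.
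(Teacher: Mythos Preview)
Your proof is correct and follows essentially the same approach as the paper. The paper's version of the lower bound is phrased slightly more compactly---it bounds the denominator above by $(r_2-2)(r_3-2)$ (immediate from Lemma~\ref{LHS}) and the numerator $\delta$ below by $(r_1-2)(r_2-2)(r_3-2)$ (Lemma~\ref{delta}), then divides---but this is the same computation you perform after multiplying through by the denominator.
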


\begin{proof}
Note that $T < r_1 +2$ by  \eqref{LHS-ineq}. Since $0 < \gamma^2-(\sigma_1+2)\gamma+\sigma_2+4 < (r_2-2)(r_3-2)$ (by Lemma \ref{LHS}) and $\delta >  (r_1 -2)(r_2-2)(r_3-2) > 0$ (by Lemma \ref{delta}) we have $T > r_1 -2$. 
\end{proof}

\begin{proposition} \label{lim}
We have $\lim_{r_1 \to \infty}  T = \infty$ and 
$$
\lim_{r_1 \to 2^+} T = 4 - \frac{1}{1 + k_1 + k_2 + k_3 + k_1 k_2 + k_2 k_3 + k_3 k_1}.
$$
\end{proposition}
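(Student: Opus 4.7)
The plan has two independent pieces. For the first, $\lim_{r_1 \to \infty} T = \infty$, Lemma~\ref{t^2} immediately gives $T > r_1 - 2$, and the limit follows.

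For the second limit, set $a_j := r_j - 2 > 0$ and $\Sigma := 1 + k_1 + k_2 + k_3 + k_1 k_2 + k_2 k_3 + k_3 k_1$. First I would note that $a_1, a_2, a_3$ all tend to $0^+$ as $r_1 \to 2^+$: this follows from $r_3 \in (2, r_1)$ (Proposition~\ref{existence}) together with $2 < r_2 < r_1 + r_3 - 2$ (Lemma~\ref{triangle}). Lemma~\ref{ratio} then supplies the limiting ratios
$$
q_2 := \lim_{r_1 \to 2^+} \frac{a_2}{a_1} = \frac{1+k_1+k_3}{1+k_2+k_3},
\qquad
q_3 := \lim_{r_1 \to 2^+} \frac{a_3}{a_1} = \frac{1+k_1+k_2}{1+k_2+k_3}.
$$

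The core of the argument is to show that both numerator $\delta$ and denominator $\gamma^2 - (\sigma_1+2)\gamma + \sigma_2 + 4$ of $T$ vanish to order $a_1^2$, and to extract the leading coefficients. The expansion $\delta = a_1 a_2 a_3 + 2(a_1 a_2 + a_2 a_3 + a_3 a_1) - (a_1^2 + a_2^2 + a_3^2)$ obtained in the proof of Lemma~\ref{delta} shows that the cubic term is negligible and
$$
\lim_{r_1 \to 2^+} \delta/a_1^2 \;=\; N \;:=\; 2(q_2 + q_3 + q_2 q_3) - 1 - q_2^2 - q_3^2.
$$
For the denominator, Lemma~\ref{LHS} rewrites it as $a_2 a_3 - (a_2 + a_3 - a_1)^2 p_1/(p_1+1)^2$, and since $p_1 = S_{k_1}(r_1)/S_{k_1-1}(r_1) \to (k_1+1)/k_1$ (from $S_m(2) = m+1$), division by $a_1^2$ yields
$$
\lim_{r_1 \to 2^+} \bigl(\gamma^2 - (\sigma_1+2)\gamma + \sigma_2 + 4\bigr)/a_1^2 \;=\; D \;:=\; q_2 q_3 - (q_2+q_3-1)^2 \cdot \frac{k_1(k_1+1)}{(2k_1+1)^2}.
$$

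The final step is a purely algebraic simplification, and this is where the main (though routine) effort lies. Setting $d := 1 + k_2 + k_3$, the key observation is that $q_2 + q_3 - 1 = (1+2k_1)/d$, so the factor $(2k_1+1)^2$ in $D$ cancels exactly. Combining this with the identity $(1+k_1+k_3)(1+k_1+k_2) - k_1(k_1+1) = \Sigma$, one obtains $D = \Sigma/d^2$. A parallel but longer expansion, keeping track of coefficients of $1, k_1, k_2, k_3, k_1 k_2, k_1 k_3, k_2 k_3$ in $d^2 N$, gives $N = (4\Sigma - 1)/d^2$. Therefore $\lim T = N/D = (4\Sigma - 1)/\Sigma = 4 - 1/\Sigma$, as required. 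The symmetric case $k_1 = k_2 = k_3$ (where $q_2 = q_3 = 1$) provides a useful sanity check that the formulas reduce correctly.
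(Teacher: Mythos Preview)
Your proposal is correct and follows essentially the same approach as the paper: both use Lemma~\ref{t^2} for the first limit, and for the second limit both divide numerator and denominator by $a_1^2$, invoke Lemma~\ref{ratio} for the ratios $q_2,q_3$, and reduce to the identities $D=\Sigma/d^2$ and $N=(4\Sigma-1)/d^2$. The only cosmetic difference is that the paper rewrites $\delta$ in the equivalent form $a_1a_2a_3+4a_2a_3-(a_2+a_3-a_1)^2$, so that the numerator limit reads $4q_2q_3-(q_2+q_3-1)^2$; this slightly shortens the final expansion you describe for $N$, but the content is the same.
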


\begin{proof}
Since $T > r_1 -2$ (by Lemma \ref{t^2}) we have $\lim_{r_1 \to \infty} T = \infty$. 

Let $a_j = r_j -2 >0$ for $j=1,2,3$. By Lemma \ref{ratio} we have 
$\lim_{a_1 \to 0^+} \frac{a_2}{a_1} = q_2$ and 
$\lim_{a_1 \to 0^+}  \frac{a_3}{a_1}  = q_3$,
where $q_2= \frac{1+k_1 + k_3}{1+k_2 + k_3}$ and $q_3 = \frac{1+k_1 + k_2}{1+k_2 + k_3}$. 

From the proof of Lemma \ref{delta} we have $\delta = a_1 a_2 a_3 + 4a_2 a_3 - (a_2 + a_3 -a_1)^2$. By Lemma \ref{LHS} we have $\gamma^2-(\sigma_1+2)\gamma+\sigma_2+4 = a_2 a_3 - (a_2+a_3-a_1)^2/(p_1 + p_1^{-1}+2)$. Hence
\begin{eqnarray*}
\lim_{r_1 \to 2^+} T &=& \lim_{a_1 \to 0^+} \frac{a_1 a_2 a_3 + 4a_2 a_3 - (a_2 + a_3 -a_1)^2}{a_2 a_3 - (a_2+a_3-a_1)^2/(p_1 + p_1^{-1}+2)} \\
&=&\frac{4q_2 q_3 - (q_2 + q_3-1)^2}{q_2 q_3 - (q_2+q_3-1)^2/(\frac{k_1+1}{k_1} + \frac{k_1}{k_1+1} +2)}.
\end{eqnarray*} 

Since $q_2 + q_3 = 1+ \frac{1+2k_1}{1+k_2 + k_3}$ and $q_2q_3 = \frac{(1+k_1 + k_3)(1+k_1 + k_2)}{(1+k_2 + k_3)^2}$ we have 
\begin{eqnarray*}
4q_2 q_3 - (q_2 + q_3-1)^2 
&=& \frac{4(1+k_1 + k_3)(1+k_1 + k_2)}{(1+k_2 + k_3)^2} -  \frac{(2k_1+1)^2}{(1+k_2 + k_3)^2} \\
&=& \frac{3 + 4k_1 + 4k_2 + 4k_3 + 4k_1 k_2 + 4k_2 k_3 + 4k_3 k_1}{(1+k_2+k_3)^2},
\end{eqnarray*} 
and
\begin{eqnarray*}
q_2 q_3 - \frac{(q_2+q_3-1)^2 }{\frac{k_1+1}{k_1} + \frac{k_1}{k_1+1} +2} 
&=& \frac{(1+k_1 + k_3)(1+k_1 + k_2)}{(1+k_2 + k_3)^2}- \frac{k_1(k_1+1)}{(1+k_2+k_3)^2} \\
&=& \frac{1 + k_1 + k_2 + k_3 + k_1 k_2 + k_2 k_3 + k_3 k_1}{(1+k_2+k_3)^2}.
\end{eqnarray*} 
Hence $\lim_{r_1 \to 2^+} T = 4 - 1/(1 + k_1 + k_2 + k_3 + k_1 k_2 + k_2 k_3 + k_3 k_1)$.
\end{proof}

Since $a_j=2k_j+1$, we also have $\lim_{r_1 \to 2^+} T =  4 - 4/(1+a_1 a_2 + a_2 a_3 + a_3 a_1)$. If we let 
$$
\theta_0 = \frac{1}{2}\arccos(1-2/(1+a_1a_2 + a_2 a_3 + a_3 a_1)) \in (0, \frac{\pi}{2})
$$
then $\lim_{r_1 \to 2^+} T = 2 + 2 \cos2\theta = 4 \cos^2 \theta_0$. 

\section{Left-orderability} \label{LO}

In this section we will use the continuous family of irreducible $\mathrm{SL}_2(\BR)$-representations constructed in the previous section to study the left-orderabilty of the fundamental groups of the 3-manifolds obtained by Dehn surgeries on $P(a_1, a_2, a_3)$ and to study the left-orderabilty of the fundamental groups of the cyclic branched covers of $P(a_1, a_2, a_3)$, where $a_1, a_2, a_3$ are positive odd integers. In particular, we will use elliptic $\mathrm{SL}_2(\BR)$-representations to give a proof of Theorem \ref{main}. 

Recall that we have considered the triple $(r_1, r_2, r_3) \in (2, \infty)^3$ in Proposition \ref{existence} when 
$k_1 < k_3$. If $k_1=k_3$ we let $r_1=r_2=r_3>2$. Note that $r_3, r_2$ are continuous functions in $r_1 \in (2, \infty)$. In both cases, for each $r_1 \in (2, \infty)$ we let $r=x^3+x-x\gamma$, $t =\pm \sqrt{T}$ where
$$
\gamma = \frac{r_2+r_3+(r_1+2)p_1}{p_1+1}, \quad T = \frac{\delta}{\gamma^2-(\sigma_1+2)\gamma+\sigma_2+4} >0. 
$$
Note that $\sigma_1 +2 - 2\gamma >0$ by Lemma \ref{sg}. Then the system \eqref{eq:Af1}--\eqref{eq:Af2} is satisfied, and hence there exists an  irreducible representation $\rho^{\pm}=\rho^{\pm}_{r_1}: G(K) \to \mathrm{SL}_2(\BC)$ with $$\left( \tr \rho^{\pm}(x_1),  \tr \rho^{\pm}(x_2 x_3^{-1}), \tr \rho^{\pm}(x_3 x_1^{-1}), \tr \rho^{\pm}(x_1 x_2^{-1}),\tr \rho^{\pm}(x_1 x_2 x_3) \right) = (\pm \sqrt{T}, r_1, r_2, r_3, r).$$ 

Since $\rho^{\pm}$ has real traces, it is conjugate to either an $\mathrm{SL}_2(\BR)$ representation or an $\mathrm{SU}(2)$ representation, see e.g. \cite[Lemma 10.1]{HP}. If the latter occurs, then $|\tr \rho^{\pm}(g)|<2$ for all $g \in G(K)$. Since $\tr \rho^{\pm}_{r_1}(x_2 x_3^{-1}) = r_1 >2$, $\rho^{\pm}$ is conjugate to an $\mathrm{SL}_2(\BR)$ representation.

\subsection{Elliptic $\bm{\mathrm{SL}_2(\BR)}$-representations} 

Let $X$ be the complement of an open tubular neighborhood of $K:=P(a_1, a_2, a_3)$ in $S^3$, and $X_{m/l}$ the 3-manifold obtained from $S^3$ by $\frac{m}{l}$-surgery along $K$.

An element of $\mathrm{SL}_2(\BR)$ is called elliptic if its trace is a real number in $(-2,2)$. A representation $\rho: \BZ^2 \to \mathrm{SL}_2(\BR)$ is called elliptic if the image group $\rho(\BZ^2)$ contains an elliptic element of $\mathrm{SL}_2(\BR)$. In which case, since $\BZ^2$ is an abelian group every non-trivial element of $\rho(\BZ^2)$ must also be elliptic.

\begin{proposition} \label{prop} 
For each rational number $\frac{m}{l}\in(-\infty, 0) \cup (0,1)$ there exists a  representation $\rho: \pi_1(X_{m/l}) \to \mathrm{SL}_2(\BR)$ such that $\rho\big|_{\pi_1(\partial X)}: \pi_1(\partial X) \cong \BZ^2 \to \mathrm{SL}_2(\BR)$ is an elliptic representation. 
\end{proposition}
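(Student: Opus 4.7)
My plan is to exploit the continuous family of irreducible $\mathrm{SL}_2(\BR)$-representations $\rho^{\pm}=\rho^{\pm}_{r_1}\colon G(K)\to\mathrm{SL}_2(\BR)$ constructed at the start of Section \ref{LO}. Since $\tr\rho^{\pm}(x_1)=\pm\sqrt{T(r_1)}$ and Proposition \ref{lim} gives $\lim_{r_1\to 2^+}T=4\cos^2\theta_0<4$ together with $\lim_{r_1\to\infty}T=\infty$, the continuity of $T$ produces an interval $J=(2,r_1^{*})\subset(2,\infty)$ on which $T<4$, i.e., on which $\rho^{\pm}(\mu)$ is elliptic in $\mathrm{SL}_2(\BR)$ (where $\mu=x_1$ is the meridian). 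Because the longitude $\lambda$ commutes with $\mu$ in $\pi_1(\partial X)$, its image $\rho^{\pm}(\lambda)$ lies in the centralizer of $\rho^{\pm}(\mu)$, which is the compact rotation subgroup stabilizing the common fixed point in the upper half plane; hence $\rho^{\pm}(\lambda)$ is itself elliptic and the restriction $\rho^{\pm}|_{\pi_1(\partial X)}$ is automatically an elliptic representation of $\BZ^2$, as required.

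The remaining task is to arrange that $\rho^{\pm}$ actually descends to $\pi_1(X_{m/l})$. After simultaneously conjugating $\rho^{\pm}(\mu)$ and $\rho^{\pm}(\lambda)$ into the standard rotation subgroup, I would write them as rotations by angles $2\alpha(r_1)$ and $2\beta(r_1)$, with $\alpha,\beta\colon J\to\BR$ chosen as continuous lifts. The product $\rho^{\pm}(\mu^m\lambda^l)$ is then rotation by $2(m\alpha+l\beta)$, so $\rho^{\pm}$ descends to $\pi_1(X_{m/l})$ exactly when $F(r_1):=m\alpha(r_1)+l\beta(r_1)\in\pi\BZ$. Thus the proposition reduces to showing, for each rational $\frac{m}{l}\in(-\infty,0)\cup(0,1)$, that the continuous function $F$ attains some value in $\pi\BZ$ on $J$; this I would establish by an intermediate value theorem argument driven by the endpoint behavior of $(\alpha,\beta)$.

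The key endpoint analysis is the following. As $r_1\to 2^+$, Proposition \ref{lim} gives $\alpha\to\theta_0\in(0,\pi/2)$; moreover, by Lemma \ref{ratio} we have $r_2,r_3\to 2^+$, so every non-meridional trace $r_j$ tends to $2$ and the family limits to a reducible representation. Since the longitude of a knot in $S^3$ is null-homologous, every reducible representation sends $\lambda$ to $\pm I$, so $\tr\rho^{\pm}(\lambda)\to\pm 2$, which forces $\beta\to 0$ for a suitable choice of continuous lift. At the opposite end, as $r_1\to r_1^{*}$ with $T\to 4^{-}$, we have $\alpha\to 0$ on the branch $\rho^{+}$ and $\alpha\to\pi$ on the branch $\rho^{-}$, while $\beta$ tends to a finite nonzero limit $\beta^{*}$. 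Gluing the two sign branches at the parabolic point $r_1^{*}$ produces a single continuous path along which $-\beta/\alpha$ sweeps through an interval, and the IVT applied to $F$ then yields a solution of $F(r_1)\in\pi\BZ$ for every rational $m/l$ in the advertised range.

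The main obstacle I expect is the explicit asymptotic analysis of $\beta(r_1)$, particularly near the parabolic endpoint. The longitude of $P(a_1,a_2,a_3)$ is a long word in $x_1,x_2,x_3$ and the twist-region subwords $(x_jx_{j+1}^{-1})^{k_j}$ coming from the Wirtinger presentation, and expressing $\tr\rho^{\pm}(\lambda)$ in terms of $t,r_1,r_2,r_3,\gamma$ will require a systematic use of the Chebyshev identities of Lemma \ref{chev}. The precise upper bound $\frac{m}{l}<1$ in the proposition reflects the limiting slope $-\beta^{*}/\alpha$ extracted from this computation, and showing that this limit is exactly $1$ is the delicate step where the specific geometry of the pretzel knot enters.
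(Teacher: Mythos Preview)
Your overall strategy is the paper's: restrict to the interval $(2,r_1^{*})$ where $T<4$, write $\rho^{\pm}(\mu)$ and $\rho^{\pm}(\lambda)$ as rotations, and run an intermediate value argument on the ratio of their angles, splitting into the $\rho^{+}$ and $\rho^{-}$ branches to cover $(-\infty,0)$ and $(0,1)$ respectively. The endpoint behavior you predict at $r_1\to 2^{+}$ (meridian angle $\to\theta_0$, longitude angle $\to 0$) and at $r_1\to r_1^{*}$ (meridian angle $\to 0$ or $\pi$) is exactly what the paper obtains.

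The genuine gap is the longitude computation. You propose to extract the longitude angle by evaluating the Wirtinger longitude word through Chebyshev identities, and you flag this as ``the main obstacle.'' The paper sidesteps this entirely: it invokes \cite[Proposition 4.1]{Ha}, which gives the closed relation
\[
(1+L)(M+M^{-1})(\sigma_1+2-2\gamma)=(1-L)(M-M^{-1})(\sigma_1+2-2t^2)
\]
between the longitude eigenvalue $L$ and the meridian eigenvalue $M=e^{i\theta}$. Solving gives $L=-(\alpha M^{-1}-\beta M)/(\alpha M-\beta M^{-1})$ with $\alpha=\sigma_1+2-\gamma-t^2$ and $\beta=\gamma-t^2$; one checks $\alpha>\beta>0$ using Lemmas \ref{sg}, \ref{triangle}, \ref{t^2}, and then $L=e^{i\varphi}$ with $\varphi\in(0,\pi)$. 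From this formula the endpoint limits $\varphi\to 0$ as $r_1\to 2^{+}$ and $\varphi\to\pi$ as $r_1\to r_1^{*}$ are immediate, and the latter is precisely what produces the bound $\tfrac{m}{l}<1$ on the $\rho^{-}$ branch (since $-\varphi/(\theta-\pi)\to -\pi/(-\pi)=1$). Your proposal asserts that the longitude angle tends to ``a finite nonzero limit $\beta^{*}$'' at $r_1^{*}$ and that this limit yields exactly $1$, but without the formula above you have no mechanism to establish either claim; a direct trace computation of the pretzel longitude would be formidable and is not what the paper does.
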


\begin{proof}
Since $\lim_{r_1 \to 2^+} T = 4\cos^2\theta_0 < 4$ and $\lim_{r_1 \to \infty} T = \infty$ (by Proposition \ref{lim}),   
there exists $r_1^{*} > 2$ such that $T(r_1^{*})=4$ and for all $r_1 \in(2, r_1^{*})$ we have $0 < T(r_1)<4$. 

For each $r_1 \in (2, r_1^*)$ we let $\theta = \arccos (\sqrt{T}/2) \in (0, \pi/2)$.  Note that $\lim_{r_1 \to 2 ^+} \theta = \theta_0$ and $\lim_{r_1 \to r_1^*} \theta = 0$. Then $\tr \rho^{\pm} (x_1)=\sqrt{T} = \pm 2\cos\theta = \pm (e^{i\theta} + e^{-i\theta})$. 

Let $\lambda$ be the canonical longitude corresponding to the meridian $\mu:=x_1$.  Up to conjugation we assume that 
 $$
 \rho^+(\mu)= \left[
\begin{array}{cc}
M & *\\
0 & M^{-1}
\end{array}
\right] \quad \text{and} \quad \rho^+(\lambda) = \left[
\begin{array}{cc}
L & *\\
0 & L^{-1}
\end{array}
\right]
$$
where $M:=e^{i\theta}$. By \cite[Proposition 4.1]{Ha} $L$ satisfies the following equation 
$$
(1+L)(M+M^{-1})(\sigma_1+2-2\gamma) = (1-L)(M-M^{-1})(\sigma_1+2-2t^2).
$$
This can be rewritten as $L(\alpha M- \beta M^{-1}) + \alpha M^{-1}- \beta M=0$, where 
$$
\alpha :=\sigma_1+2-\gamma-t^2, \quad \beta :=\gamma - t^2.
$$
Note that $\alpha, \beta \in \BR$. By Lemmas \ref{triangle} and \ref{t^2} we have
$$
\beta = \frac{(r_1+2)w_1+(r_2 + r_3)v_1}{v_1+w_1} - t^2 = \frac{(r_2 + r_3-r_1-2)v_1}{v_1+w_1} +r_1+2 - t^2> 0.
$$ 
Moreover, we have $\alpha - \beta=\sigma_1 +2 - 2\gamma > 0$ by Lemma \ref{sg}. Hence $\alpha > \beta > 0$. 

We have $|\alpha M- \beta M^{-1}|^2=\alpha^2 + \beta^2 - 2\alpha \beta \cos\theta = (\alpha - \beta)^2 + 2\alpha \beta (1-\cos\theta) >0$, which implies that $\alpha M- \beta M^{-1} \not=0$. Hence $L=-(\alpha M^{-1}- \beta M)/(\alpha M- \beta M^{-1})$. 

Since $\alpha M^{-1}- \beta M$ and $\alpha M- \beta M^{-1}$ are complex conjugates, $L$ is a unit complex number. Moreover, by a direct calculation we have
\begin{eqnarray*}
\text{Re}(L) &=&  \big( 2\alpha \beta -(\alpha^2+\beta^2) \cos 2\theta \big )/(\alpha^2 + \beta^2 - 2\alpha \beta \cos\theta),\\
\text{Im}(L) &=& (\alpha^2 - \beta^2)\sin2\theta/(\alpha^2 + \beta^2 - 2\alpha \beta \cos\theta).
\end{eqnarray*}
Note that $\text{Im}(L) >0$ and so $L = e^{i\varphi}$, where
$$\varphi := \arccos \left( ( 2\alpha \beta -(\alpha^2+\beta^2) \cos 2\theta )/(\alpha^2 + \beta^2 - 2\alpha \beta \cos\theta) \right) \in (0, \pi).$$

As $r_1 \to 2^+$ we have $r_3, r_2 \to 2^+$ and $\theta \to \theta_0$. Note that $\gamma = \frac{r_2+r_3+(r_1+2)p_1}{p_1+1} \to 4$ and so $\alpha, \beta \to 4-4\cos^2\theta_0$. Hence  $L=-(\alpha M^{-1}- \beta M)/(\alpha M- \beta M^{-1}) \to 1$ and $\varphi \to 0$. 

As $r_1 \to (r_1^*)^-$ we have $T \to 4$ and $\theta \to 0^+$. Note that $\alpha \to \alpha(r_1^*)$ and $\beta \to \beta(r_1^*)$, where $\alpha(r_1^*) > \beta(r_1^*) >0$. Hence $L=-(\alpha M^{-1}- \beta M)/(\alpha M- \beta M^{-1}) \to -1$ and  $\varphi \to \pi$. 

Since $\lim_{r_1 \to 2^+} (- \frac{\varphi}{\theta}) = 0$ and $\lim_{r_1 \to (r_1^*)^-} (- \frac{\varphi}{\theta}) = -\infty$, the image of the continuous function $-\frac{\varphi}{\theta}: (2, r_1^*) \to \BR$ contains the interval $(-\infty,0)$. 

We now finish the proof of Proposition \ref{prop}. First, suppose $\frac{m}{l}\in  (-\infty,0) \cap \BQ$. By the above argument, $\frac{m}{l}= - \frac{\varphi(r_1)}{\theta(r_1)}$ for some $r_1 \in (2, r_1^*)$. Since $M^m L^l = e^{i(m\theta + l\varphi)} = 1$, we have $\rho^+(\mu^m \lambda^l) = I$. Let $\rho: G(K) \to \mathrm{SL}_2(\BR)$ be an $\mathrm{SL}_2(\BR)$ representation conjugate to $\rho^+$. Then we also have $\rho(\mu^m \lambda^l) = I$. This means that $\rho$ extends to a  representation $\rho: \pi_1(X_{m/l}) \to \mathrm{SL}_2(\BR)$. Note that $\rho\big|_{\pi_1(\partial X)}$ is an elliptic representation. 

Finally, suppose  $\frac{m}{l}  \in (0,1) \cap \BQ$. Consider $\rho^-=\rho^-(r_1)$ of the form 
$$
 \rho^-(\mu)= \left[
\begin{array}{cc}
M' & *\\
0 & (M')^{-1}
\end{array}
\right] \quad \text{and} \quad \rho^-(\lambda) = \left[
\begin{array}{cc}
L' & *\\
0 & (L')^{-1}
\end{array}
\right]
$$
where $M'=-M= e^{i(\theta-\pi)}$. As above we have $L' = -\frac{\alpha (M')^{-1}- \beta M'}{\alpha M'- \beta (M')^{-1}}= L$ and so $L'=e^{i\varphi}$. 

Since $\lim_{r_1 \to 2^+} (- \frac{\varphi}{\theta - \pi}) = 0$ and $\lim_{r_1 \to (r_1^*)^-} (- \frac{\varphi}{ \theta - \pi}) = 1$, the image of the continuous function $-\frac{\varphi}{\pi - \theta}: (2, r_1^*) \to \BR$ contains the interval $(0,1)$. This implies that $\frac{m}{l}= - \frac{\varphi(r_1)}{\theta(r_1)-\pi}$ for some $r_1 \in (2, r_1^*)$. Since $(M')^m (L')^l = e^{i(m(\theta-\pi) + l\varphi)} = 1$, we have $\rho^-(\mu^m \lambda^l) = I$. 

Let $\rho': G(K) \to \mathrm{SL}_2(\BR)$ be an $\mathrm{SL}_2(\BR)$-representation conjugate to $\rho^-$. Then we also have $\rho'(\mu^m \lambda^l) = I$. Hence $\rho'$ extends to a  representation $\rho': \pi_1(X_{m/n}) \to \mathrm{SL}_2(\BR)$. Note that $\rho' \big|_{\pi_1(\partial X)}$ is an elliptic representation. 
\end{proof}

\subsection{Proof of Theorem \ref{main}} $(i)$ Suppose $\frac{m}{l} \in (-\infty, 1) \cap \BQ$. If $\frac{m}{l} \not= 0$, by Proposition \ref{prop}, there exists a  representation $\rho: \pi_1(X_{m/l}) \to \mathrm{SL}_2(\BR)$ such that $\rho\big|_{\pi_1(\partial X)}$ is an elliptic representation. This representation lifts to a representation $\tilde{\rho}: \pi_1(X_{m/l}) \to \widetilde{\mathrm{SL}_2(\BR)}$, where $\widetilde{\mathrm{SL}_2(\BR)}$ is the universal covering group of $\mathrm{SL}_2(\BR)$. See e.g. \cite[Section 3.5]{CD} and \cite[Section 2.2]{Va}. Note that $X_{m/l}$ is an irreducible 3-manifold (by \cite{HTh}) and $\widetilde{\mathrm{SL}_2(\BR)}$ is a left orderable group (by \cite{Be}). Hence, by \cite{BRW}, $\pi_1(X_{m/l})$ is a left orderable group. 
Finally, $0$-surgery along a knot always produces a prime manifold whose first Betti number is $1$, and by \cite{BRW} such manifold has left orderable fundamental group.

$(ii)$ Suppose $n > 2\pi / \arccos(1-2/(1+a_1a_2 + a_2 a_3 + a_3 a_1))=\pi/\theta_0 $. Since $4\cos^2\frac{\pi}{n} \in (4 \cos^2\theta_0, 4)$, we have $4\cos^2\frac{\pi}{n} = T(r_1)$ for some $r_1 \in (2, r_1^*)$. Consider $\rho^+:=\rho^+_{r_1}$. Then $\tr \rho^+(\mu) = \sqrt{T} = 2\cos\frac{\pi}{n}$. This implies that $e^{\frac{\pm i\pi}{n}}$ are eigenvalues of $\rho^+(\mu)$. Hence $\rho^+(\mu)$ is conjugate to the diagonal matrix $D:=\mathrm{diag}(e^{\frac{i\pi}{n}}, e^{-\frac{i\pi}{n}})$. 

Let $\rho: G(K) \to \mathrm{SL}_2(\BR)$ be an irreducible $\mathrm{SL}_2(\BR)$-representation conjugate to $\rho^+$. Then $\rho(\mu)$  is conjugate to the diagonal matrix $D$. Since $D^n=-I$, we have  $\rho(\mu^n) = -I$. By applying \cite[Theorem 6]{BGW} and \cite[Theorem 3.1]{Hu} we conclude that the fundamendal group of the $n^{\mathrm{th}}$-cyclic branched cover of $P(a_1, a_2, a_3)$ is left-orderable. 

\section*{Acknowledgements} 
This paper is essentially the Ph.D. thesis of the first author written under the supervision of the second author. The second author is partially supported by a grant from the Simons Foundation (\#354595).

\end{document}